\newtheorem{theorem}{Theorem}[section]
\newtheorem{lemma}[theorem]{Lemma}
\newtheorem{corollary}[theorem]{Corollary}
\newtheorem{proposition}[theorem]{Proposition}
\theoremstyle{definition}
\newtheorem{example}{Example}
\renewcommand{\emptyset}{\varnothing}
\newcommand{\centered}[1]{\begin{tabular}{c} #1 \end{tabular}}
\newcommand{\cev}[1]{\reflectbox{\ensuremath{\vec{\reflectbox{\ensuremath{#1}}}}}}
\title{
On Connected Strongly-Proportional Cake-Cutting%
\footnote{
    A preliminary version of this work appeared in Proceedings of the 27th European Conference on Artificial Intelligence \citep{janko2024connectedecai}.
    This version contains new results on stronger than strongly-proportional allocations (\Cref{sec:stronger}) and pies (\Cref{sec:pies}), a discussion of why the characterization for hungry agents cannot be generalized to non-hungry agents (\Cref{sec:general}), as well as all proofs omitted from the conference version (\Cref{thm:general_condition,thm:general_improved_ub,thm:general_equal_lb}).
}
}
\author{
Zsuzsanna Jank\'{o}\footnote{Corvinus University of Budapest, HUN-REN Centre for Economic and Regional Studies}
\and
Attila Jo\'{o}\footnote{University of Hamburg}
\and
Erel Segal-Halevi\footnote{Ariel University}
\and
Sheung Man Yuen\footnote{National University of Singapore}
}
\begin{document}

\maketitle

\begin{abstract}
We investigate the problem of fairly dividing a divisible heterogeneous resource, also known as a cake, among a set of agents who may have different entitlements.
We characterize the existence of a connected strongly-proportional allocation---one in which every agent receives a contiguous piece worth strictly more than their proportional share.
The characterization is supplemented with an algorithm that determines its existence using $O(n \cdot 2^n)$ queries.
We devise a simpler characterization for agents with strictly positive valuations and with equal entitlements, and present an algorithm to determine the existence of such an allocation using $O(n^2)$ queries.
We provide matching lower bounds in the number of queries for both algorithms.
When a connected strongly-proportional allocation exists, we show that it can also be computed using a similar number of queries.
We also consider the problem of deciding the existence of a connected allocation of a cake in which each agent receives a piece worth a small fixed value more than their proportional share, and the problem of deciding the existence of a connected strongly-proportional allocation of a pie.
\end{abstract}

\section{Introduction}
\label{sec:intro}

Consider a group of siblings who inherited a land estate, and would like to divide it fairly among themselves.
The simplest procedure for attaining a fair division is to sell the land and divide the proceeds equally; this procedure guarantees each sibling a proportional share of the total land value.

But in some cases, it is possible to give each sibling a much better deal. 
As an example, suppose that the land estate contains one part that is fertile and arable, and one part that is barren but has potential for coal mining.
This land is to be divided between two siblings, one of whom is a farmer and the other is a coal factory owner.
If we give the former piece of land to the farmer and the latter piece of land to the coal factory owner, both siblings will feel that they receive more than half of the total land value. 
Our main question of interest is: when is such a superior allocation possible?

We study this question in the framework of \emph{cake-cutting}.
In this setting, there is a divisible resource called a \emph{cake}, which can be cut into arbitrarily small pieces without losing its value.
The cake is represented simply by an interval which can model a one-dimensional object, such as time.
There are $n$ agents, each of whom has a personal measure of value over the cake.
The goal is to partition the cake into $n$ pieces and allocate one piece per agent such that the agents feel that they receive a ``fair share'' according to some fairness notion.

A common fairness criterion---nowadays called \emph{proportionality}---requires that each agent $i$ receives a piece of cake that is worth, according to $i$'s valuation, at least $1/n$ of the total cake value.
In his seminal paper, \citet{steinhaus1948problem} described an algorithm, developed by his students Banach and Knaster, that finds a proportional allocation; moreover, this allocation is \emph{connected}---each agent receives a single contiguous part of the cake. 
This algorithm is now called the \emph{last diminisher} algorithm.

But the guarantee of proportionality allows for the possibility that each agent receives a piece worth \emph{exactly} $1/n$; when this is the case, there is little advantage in using a cake-cutting procedure over selling the land and giving $1/n$ to each partner.
A stronger criterion, called \emph{strong-proportionality} or \emph{super-proportionality}, requires that each agent $i$ receives a piece of cake worth \emph{strictly more} than $1/n$ of the total cake value from $i$'s perspective.
This raises the question of when such a strongly-proportional allocation exists.

Obviously, a strongly-proportional allocation does not exist when all the agents' valuations are identical, since if any agent receives more than $1/n$ of the cake, then some other agent must receive less than $1/n$ of the cake.
Interestingly, in all other cases, a strongly-proportional allocation exists.
Even when \emph{two} agents have non-identical valuations, there exists an allocation in which \emph{all} $n$ agents receive more than $1/n$ of the total cake value from their perspectives \citep{dubins1961cut,rebman1979get}.
\citet{woodall1986note} presented an algorithm for finding such a strongly-proportional allocation.
\citet{barbanel1996game} generalized this algorithm to agents with unequal entitlements, and \citet{janko2021cutting} presented a simple algorithm for this generalized problem and extended it to infinitely many agents.

The problem with all these algorithms is that, in contrast to the last diminisher algorithm for proportional cake-cutting, they do not guarantee a \emph{connected} allocation.
Connectivity is an important practical consideration when allocating cakes; for example, if the cake is the availability of a meeting room by time and needs to be allocated to different teams throughout the day, then a two-hour slot is easier for a team to utilize than six disjoint twenty-minute slots.
Indeed, connectivity is the most commonly studied constraint in cake-cutting literature \citep{suksompong2021constraints,stromquist1980how,su1999rental,stromquist2008envy,goldberg2020contiguous,elkind2022mind}, and relaxing this constraint may present each agent instead with a ``countable union of crumbs'' \citep{stromquist1980how}.

Thus, our main questions of interest are:

\begin{quote}
\emph{What are the necessary and sufficient conditions for the existence of a connected strongly-proportional cake allocation? What are the query complexities to determine these conditions?}
\end{quote}

\subsection{Our Results}
\label{sec:results}

The cake to be allocated, modeled by a unit interval $[0, 1]$, is to be divided among $n$ agents who may have different entitlements for the cake, with the entitlements summing to $1$.
Each agent receives an interval of the cake that is disjoint from the other agents' intervals.
Each agent has a valuation function on the intervals of the cake that is non-negative, finitely additive, and continuous with respect to length.
In this regard, the value of a single point is zero to every agent, and we can assume without loss of generality that agents receive \emph{closed} intervals of the cake, and that any two agents' pieces can possibly intersect at the endpoints of their respective intervals.%
\footnote{This is often assumed in cake-cutting literature; see e.g. \citet{procaccia2016handbook}.}
In order to access agents' valuations in the algorithms, we allow algorithms to make eval and (right-)mark%
\footnote{
  We choose \emph{right}-mark instead of the usual \emph{left}-mark for convenience.
  Our algorithms still work if only left-mark queries are available (together with eval).
  See \Cref{ap:left_right_marks} for a more detailed explanation.
}
queries of each agent as in the standard Robertson-Webb model \citep{robertson1998cake}.
More details of our model are provided in \Cref{sec:model}.

In \Cref{sec:hungry}, we consider \emph{hungry} agents---those who have positive valuations for any part of the cake with positive length.
For agents with equal entitlements, we show that a connected strongly-proportional allocation exists if and only if there are two agents with different $r$-marks for some $r \in \{1/n, 2/n, \ldots, (n-1)/n\}$, where an $r$-mark is a point that divides the cake into two such that the left part of the cake is worth $r$ to that agent.
This implies that the existence of such an allocation can be decided using $n(n-1)$ queries.
The proof of sufficiency is constructive, so a connected strongly-proportional allocation can be computed using $O(n^2)$ queries if it exists.
We also prove that any algorithm that decides whether a connected strongly-proportional allocation exists must make at least $n(n-1)/2$ queries, giving an asymptotically tight bound (within a factor of $2$) of $\Theta(n^2)$.
For agents with possibly unequal entitlements, we show that a lower bound number of queries to decide whether a connected strongly-proportional allocation exists is $n \cdot 2^{n-2}$.
Together with a result from \Cref{sec:general} later on the upper bound number of queries, this yields a tight bound of $\Theta(n \cdot 2^n)$ queries.

In \Cref{sec:general}, we consider agents who are not necessarily hungry.
The characterization from \Cref{sec:hungry} for hungry agents with equal entitlements does not work for non-hungry agents, which motivates us to find another characterization by considering permutations of agents.
We show that a connected strongly-proportional allocation exists if and only if there exists a permutation of agents such that when the agents go in the order as prescribed by the permutation and make their rightmost marks worth their entitlements to each of them one after another, the mark made by the last agent does not reach the end of the cake.
This result holds regardless of the agents' entitlements.
While an algorithm to determine this condition requires $n \cdot n!$ queries, we show that this number can be reduced by a factor of $2^{\omega(n)}$ to $n \cdot 2^{n-1}$ via dynamic programming.
We also prove a lower bound number of queries of $\Omega(n \cdot 2^n)$ to determine this condition, even for agents with equal entitlements.
Therefore, for agents who are not necessarily hungry, we also obtain a tight bound of $\Theta(n \cdot 2^n)$, whether the entitlements are equal or not.
A connected strongly-proportional allocation can be computed using $O(n \cdot 2^n)$ queries if it exists.

\Cref{tab:summary} summarizes of our results from \Cref{sec:hungry,sec:general}.

\begin{table}[ht]
\centering
\begin{tabular}{c|cc}
    & \textbf{hungry agents} & \textbf{general agents} \\ 
    \hline \rule{0cm}{3ex}
    \textbf{equal entitlements} 
        & $\Theta(n^2)$ (Thm \ref{thm:hungry_equal_tight}) 
        & $\Theta(n \cdot 2^n)$ (Thm \ref{thm:general_unequal_tight}) \\ 
        \rule{0cm}{4ex}
    \centered{\textbf{possibly unequal} \\ \textbf{entitlements}} 
        & $\Theta(n \cdot 2^n)$ (Thm \ref{thm:hungry_unequal_tight}) 
        & $\Theta(n \cdot 2^n)$ (Thm \ref{thm:general_unequal_tight})
\end{tabular}
\caption{Number of queries required to decide the existence of a connected strongly-proportional allocation of a cake for $n$ agents, and to compute one if it exists.}
\label{tab:summary}
\end{table}

In \Cref{sec:stronger}, we consider a stronger fairness notion where each agent $i$ needs to receive a connected piece of cake that is worth more than $w_i + z$ for some small $z$, where $w_i$ is agent $i$'s entitlement.
We show that the number of queries needed to decide whether such an allocation exists is in $\Theta(n \cdot 2^n)$, even for hungry agents with equal entitlements.
This is analogous to the results in \Cref{sec:hungry_unequal,sec:general}, which shows that the stronger fairness notion considered in this section does not make the problem any harder (nor easier).

In \Cref{sec:pies}, we consider a connected strongly-proportional allocation of a \emph{pie} instead of a cake, and show that no finite algorithm can decide the existence of such an allocation even for hungry agents with equal entitlements, demonstrating the intractability of the problem in this new setting.

\subsection{Further Related Work}

A weaker fairness notion of \emph{proportionality} is well-studied in cake-cutting literature.
It is known that a connected proportional allocation always exists for agents with equal entitlements and such an allocation can be computed using $\Theta(n \log n)$ queries \citep{steinhaus1948problem,even1984note,woeginger2007complexity}.
\citet{cseh2020complexity} presented an algorithm that finds a possibly non-connected proportional allocation for agents with general entitlements---in particular, their algorithm uses a finite but \emph{unbounded} number of queries when agents have irrational entitlements.
In contrast, we show that a connected \emph{strongly-proportional} allocation may not exist, and such an allocation can be computed (if it exists) using $\Theta(n \cdot 2^n)$ queries.
A number of works studied the number of \emph{cuts} required for a proportional allocation, rather than the number of queries \citep{segal2019cake,crew2020disproportionate}.

A parallel line of work studied a stronger fairness notion of \emph{super envy-freeness}: it requires, in addition to strong-proportionality, that each agent values the piece of every other agent at strictly less than $1/n$ the total cake value \citep{barbanel1996super,webb1999algorithm,cheze2020envy}.

\section{Preliminaries}
\label{sec:model}

Let the cake be denoted by $C = [0, 1]$.
The cake is to be allocated to a set of agents denoted by $[n] := \{1, \ldots, n\}$.
A \emph{piece of cake} is a finite union of closed intervals of the cake.
An \emph{allocation} of $C$ is a partition of $C$ into $n$ pairwise-disjoint%
\footnote{As mentioned in \Cref{sec:results}, two pieces of cake are also considered disjoint if their intersection is a subset of the endpoints of their respective intervals.}
pieces of cake $(X_1, \ldots, X_n)$ such that $C = X_1 \sqcup \cdots \sqcup X_n$; $X_i$ is the piece allocated to agent $i$.
An allocation is \emph{connected} if $X_i$ is a single interval for each $i \in [n]$.

The preference of each agent $i$ is represented by a \emph{valuation function} $V_i$ such that $V_i(X)$ is the value of the piece $X \subseteq C$ to agent $i$.
Each valuation function $V_i$ is defined on the algebra over $C$ generated by all intervals of $C$, and is non-negative (i.e., $V_i(X) \geq 0$ for all $X \subseteq C$ in the algebra), finitely additive (i.e., $V_i(X \cup Y) = V_i(X) + V_i(Y)$ for all disjoint $X, Y \subseteq C$ in the algebra), and normalized to one (i.e., $V_i(C) = 1$).
We assume that $F_i(x) := V_i([0, x])$ is a continuous function on $C$, and hence $V_i(\{x\}) = 0$ for all $x \in C$.
Therefore, $F_i$ is a non-decreasing function on $C$ with $F_i(0) = 0$, $F_i(1) = 1$, and $V_i([x, y]) = F_i(y) - F_i(x)$.
An agent $i$ is \emph{hungry} if $V_i(X) > 0$ for all intervals $X \subseteq C$ with positive length; this is equivalent to the condition that $F_i$ is strictly increasing.

Each agent $i$ has an \emph{entitlement} $w_i > 0$ of the cake such that $\sum_{i \in [n]} w_i = 1$.
Let $\mathbf{w}$ denote $(w_1, \ldots, w_n)$.
We say that agents have \emph{equal entitlements} if $w_i = 1/n$ for all $i \in [n]$.
For each subset $N \subseteq [n]$ of agents, define $w_{N} = \sum_{i \in N} w_i$.
Note that $w_{\emptyset} = 0$ and $w_{[n]} = 1$.
We say that agents have \emph{generic entitlements} if $w_N \neq w_{N'}$ for all distinct $N, N' \subseteq [n]$.

A \emph{(cake-cutting) instance} consists of the set of agents, their valuation functions $(V_i)_{i \in [n]}$, and their entitlements $\mathbf{w}$.

Given an instance, an allocation $(X_1, \ldots, X_n)$ is \emph{proportional} (resp.~\emph{strongly-proportional}) if $V_i(X_i) \geq w_i$ (resp.~$V_i(X_i) > w_i$) for all $i \in [n]$.
For agents with equal entitlements, a proportional (resp.~strongly-proportional) allocation requires every agent to receive a piece of cake with value at least (resp.~greater than) $1/n$.

Algorithms can make eval and mark queries of each agent in the Robertson-Webb model.
More specifically, for each agent $i \in [n]$, value $r \in [0, 1]$, and points $x, y \in C$ with $x \leq y$, $\textsc{Eval}_i(x, y)$ returns $V_i([x, y])$, and $\textsc{Mark}_i(x, r)$ returns the \emph{rightmost} (largest) point $z \in C$ such that $V_i([x, z]) = r$ (such a point exists due to the continuity of the valuations); if $V_i([x, 1]) < r$, then $\textsc{Mark}_i(x, r)$ returns $\infty$.

For $i \in [n]$ and $r \in [0, 1]$, a point $x \in C$ is an \emph{$r$-mark} of agent~$i$ if $V_i([0, x]) = r$.
While the point returned by $\textsc{Mark}_i(0, r)$ is an $r$-mark of agent $i$, the converse is not true since $\textsc{Mark}_i(0, r)$ only returns the \emph{rightmost} $r$-mark of agent $i$.
However, when agent $i$ is \emph{hungry}, then the $r$-mark is unique, and the two notions coincide.
Let $\mathcal{T}$ denote the subset $\{1/n, 2/n, \ldots, (n-1)/n\}$ of $C$---we shall consider $r$-marks for $r \in \mathcal{T}$ in \Cref{sec:hungry_equal}.

\section{Hungry Agents}
\label{sec:hungry}

We begin with the simpler case where all agents are hungry.
We first state a result which finds a connected strongly-proportional allocation of a cake for hungry agents using a small number of queries when given a connected \emph{proportional} allocation in which one agent has a strongly-proportional piece.
The proof proceeds by slightly moving the boundary between two adjacent agents' pieces such that an agent $j$ who received exactly $w_j$ eventually gets a slightly larger piece.

\begin{lemma}
\label{lem:hungry_one_agent}
Let an instance with $n$ hungry agents be given.
Suppose that we are given a connected proportional allocation $(X_1, \ldots, X_n)$ such that $V_i(X_i) > w_i$ for some $i \in [n]$.
Then, there exists a connected strongly-proportional allocation, and such an allocation can be computed using $O(n)$ queries.
\end{lemma}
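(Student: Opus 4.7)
The plan is to take the given proportional allocation and propagate the strict surplus of the designated agent outward along the interval, perturbing one boundary at a time, until every agent's piece is worth strictly more than their entitlement. Relabel the agents so that their pieces appear from left to right as $X_1 = [x_0, x_1], \ldots, X_n = [x_{n-1}, x_n]$ with $x_0 = 0$ and $x_n = 1$, and let $k$ denote the position in this order of the agent guaranteed by hypothesis to satisfy $V_k(X_k) > w_k$.

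I shall describe the rightward sweep; the leftward sweep is entirely symmetric. Process $j = k, k+1, \ldots, n-1$ in order, maintaining the invariant that at the start of iteration $j$ the current piece of agent $j$ has strict surplus $\delta_j > 0$ (this holds for $j=k$ by hypothesis). At iteration $j$, a single eval query determines whether agent $j+1$'s current piece already has value exceeding $w_{j+1}$; if so, advance immediately, since the invariant for iteration $j+1$ holds because agent $j+1$ itself has strict surplus and becomes the next donor. Otherwise proportionality forces the value of agent $j+1$'s current piece to be exactly $w_{j+1}$, and we issue the single mark query $x_j^{\mathrm{new}} := \textsc{Mark}_j(x_{j-1}^{\mathrm{cur}}, w_j + \delta_j/2)$, then reassign the slice $[x_j^{\mathrm{new}}, x_j^{\mathrm{cur}}]$ from agent $j$ to agent $j+1$. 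Hungriness of agent $j$ places $x_j^{\mathrm{new}}$ strictly to the left of $x_j^{\mathrm{cur}}$; agent $j$ now retains value exactly $w_j + \delta_j/2 > w_j$; and hungriness of agent $j+1$ on the transferred slice of positive length forces its new value to strictly exceed $w_{j+1}$, restoring the invariant at iteration $j+1$.

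After both sweeps terminate, every agent has strict surplus and the allocation remains connected since only shared endpoints were moved. Each iteration of either sweep uses $O(1)$ Robertson-Webb queries (one eval to check $V_{j+1}$, one eval to compute $\delta_j$, and one mark to locate $x_j^{\mathrm{new}}$), and each sweep performs at most $n-1$ iterations, giving the claimed $O(n)$ total. The one delicate point—this is the step I expect to be the main obstacle to get right—is ensuring that the donor does not drop to or below $w_j$ during its own shift; this is resolved cleanly by targeting the intermediate value $w_j + \delta_j/2$ through a mark query rather than any limit or continuity argument, and it is precisely here that hungriness of the \emph{donor} is used (to guarantee that such an intermediate mark lies strictly inside the current piece). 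The recipient side needs no extra care, since hungriness converts any interval of positive length into positive value automatically.
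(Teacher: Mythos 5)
Your proposal is correct and follows essentially the same strategy as the paper's proof: propagate the surplus across shared boundaries of adjacent pieces, using a mark strictly interior to the donor's piece so the donor retains value strictly above $w_j$, and hungriness of the recipient so the transferred slice of positive length yields strict surplus. The only cosmetic differences are that the paper locates the donor's $w_j$-mark and cuts at the midpoint between it and the old boundary (rather than marking at $w_j+\delta_j/2$ directly), and organizes the iteration as "find any adjacent surplus/exact pair" rather than two outward sweeps; both yield the same $O(n)$ query bound.
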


\begin{proof}
First, we find the values of $V_j(X_j)$ for all $j \in [n]$.
If $V_j(X_j) > w_j$ for all $j \in [n]$, then we are done.
Otherwise, there exist two distinct agents $i, j \in [n]$ with neighboring pieces such that $V_i(X_i) > w_i$ and $V_j(X_j) = w_j$.
By slightly moving the boundary between $X_i$ and $X_j$, we can get a new allocation in which agents $i$ and $j$ each receives a piece worth more than $w_i$ and $w_j$ respectively.
To formally describe the process of moving the boundary, we consider two complementary cases.

\textbf{Case 1: $X_i$ is to the left of $X_j$. } 
Denote $X_i = [z_1, z_2]$ and $X_j = [z_2, z_3]$.
Let $y = \textsc{Mark}_i(z_1, w_i)$; note that $y \in (z_1, z_2)$ since $V_i(X_i) > w_i$.
Let $y^*$ be the midpoint of $y$ and $z_2$.
Adjust the two agents' pieces such that agent $i$ now receives $[z_1, y^*]$ and agent $j$ now receives $[y^*, z_3]$; see \Cref{fig:hungry_equal_condition_agent_ij} for an illustration.

\begin{figure}[ht]
\centering
\begin{tikzpicture}
    \node[label=below:$z_1$] at (0,0) (w1) {};
    \node[label=below:$y$] at (2,0) (y) {};
    \node[label=below:$y^*$] at (2.5,0) (ys) {};
    \node[label=below:$z_2$] at (3.0,0) (w2) {};
    \node[label=below:$z_3$] at (5,0) (w3) {};
    
    \draw [{|[width=8]}-] (w1.center) -- (y.center);
    \draw [{|[width=8]}-] (y.center) -- (ys.center);
    \draw [{|[width=8]}-] (ys.center) -- (w2.center);
    \draw [{|[width=8]}-{|[width=8]}] (w2.center) -- (w3.center);
    
    \draw [decorate,decoration={brace}] ([yshift=5]w1.center) -- ([yshift=5]y.center) node[pos=0.2, above, black][yshift=2]{worth $w_i$ to agent $i$};
    \draw [decorate,decoration={brace}] ([yshift=5]w2.center) -- ([yshift=5]w3.center) node[pos=0.8, above, black][yshift=2]{worth $w_j$ to agent $j$};
    
    \draw [decorate,decoration={brace,mirror}] ([yshift=-21]w1.center) -- ([xshift=-2,yshift=-21]ys.center) node[pos=0.2, below, black][yshift=-2]{agent $i$'s new piece};
    \draw [decorate,decoration={brace,mirror}] ([xshift=2,yshift=-21]ys.center) -- ([yshift=-21]w3.center) node[pos=0.8, below, black][yshift=-2]{agent $j$'s new piece};
\end{tikzpicture}
\caption{Agent $i$'s and $j$'s new pieces in the proof of \Cref{lem:hungry_one_agent}.}
\label{fig:hungry_equal_condition_agent_ij}
\end{figure}

Since $[z_1, y^*] \supsetneq [z_1, y]$ and the latter is worth $w_i$ to hungry agent $i$, the new piece, $[z_1, y^*]$, is worth more than $w_i$ to agent $i$.
Likewise, since $[y^*, z_3] \supsetneq [z_2, z_3]$ and the latter is worth $w_j$ to hungry agent $j$, the new piece, $[y^*, z_3]$, is worth more than $w_j$ to agent $j$.

\textbf{Case 2: $X_i$ is to the right of $X_j$. }
Denote $X_j = [z_1, z_2]$ and $X_i = [z_2, z_3]$. 
Let $y = \textsc{Mark}_i(z_2, V_i(X_i)-w_i)$; note that $y \in (z_2, z_3)$ since $V_i(X_i)>w_i$.
Let $y^*$ be the midpoint of $z_2$ and $y$.
Adjust the two agents' pieces such that agent $j$ now receives $[z_1, y^*]$ and agent $i$ now receives $[y^*, z_3]$.

Since $[z_1, y^*] \supsetneq [z_1, z_2]$ and the latter is worth $w_j$ to hungry agent $j$, the new piece, $[z_1, y^*]$, is worth more than $w_j$ to agent $j$.
Likewise, since $[y^*, z_3] \supsetneq [y, z_3]$ and the latter is worth $w_i$ to hungry agent $i$ (due to additivity, we have $V_i(y,z_3) = V_i(z_2,z_3) - V_i(z_2,y) = w_i$), the new piece, $[y^*, z_3]$, is worth more than $w_i$ to agent $i$.

In both Case 1 and Case 2, only agent $i$'s and $j$'s pieces change; all of the other agents' pieces do not change.
All in all, one additional agent $j$ receives more than $w_j$ of the cake.
Proceeding this way at most $n-1$ times yields a connected strongly-proportional allocation.

Finding the values of all $V_j(X_j)$ at the beginning requires $n$ queries, while the adjustment of the boundaries between two agents' pieces requires a constant number of queries, so the total number of queries is in $O(n)$.
\end{proof}

We present the results separately for agents with equal entitlements and agents with possibly unequal entitlements.
For $n$ hungry agents with equal entitlements, we state in \Cref{sec:hungry_equal} a simple necessary and sufficient condition for the existence of a connected strongly-proportional allocation.
We provide an asymptotically tight bound of $\Theta(n^2)$ for the number of queries needed by an algorithm to determine the existence of such an allocation, as well as to compute one such allocation if it exists.
For agents with possibly unequal entitlements, we show in \Cref{sec:hungry_unequal} that a lower bound number of queries needed to decide the existence of a connected strongly-proportional allocation is in $\Omega(n \cdot 2^n)$.

\subsection{Equal Entitlements}
\label{sec:hungry_equal}

Recall that $\mathcal{T} = \{1/n, 2/n, \ldots, (n-1)/n\}$.
Our condition uses a particular set of $r$-marks: those with $r \in \mathcal{T}$.

\begin{theorem}
\label{thm:hungry_equal_condition}
Let an instance with $n$ hungry agents with equal entitlements be given.
Then, a connected strongly-proportional allocation exists if and only if there exist two distinct agents $i, j \in [n]$ and $r \in \mathcal{T}$ such that the $r$-mark of agent $i$ is different from the $r$-mark of agent $j$.
\end{theorem}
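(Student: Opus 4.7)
The plan is to prove both directions of the characterization separately.

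For necessity (the ``only if'' direction) I would prove the contrapositive. Suppose every pair of agents agrees on the $r$-mark for every $r \in \mathcal{T}$; call the common $(k/n)$-mark $m_k$. For a contradiction, assume a connected strongly-proportional allocation exists, and relabel agents so that agent $k$ receives the $k$-th piece from the left, $[x_{k-1}, x_k]$, with $x_0 = 0$ and $x_n = 1$. I would then show by induction on $k$ that $x_k > m_k$ for every $k \in \{1,\ldots,n-1\}$. The base case is immediate: $V_1([0, x_1]) > 1/n = V_1([0, m_1])$ combined with the strict monotonicity of $F_1$ gives $x_1 > m_1$. For the inductive step, the induction hypothesis $x_{k-1} > m_{k-1}$ and strict monotonicity of $F_k$ yield $V_k([0, x_{k-1}]) > (k-1)/n$; adding the strong-proportional bound $V_k([x_{k-1}, x_k]) > 1/n$ gives $F_k(x_k) > k/n = F_k(m_k)$, hence $x_k > m_k$. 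Applying this at $k = n-1$ forces $V_n([x_{n-1}, 1]) < 1/n$, contradicting strong proportionality of agent $n$'s piece.

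For sufficiency (the ``if'' direction), suppose $m_i^r \ne m_j^r$ for some distinct $i, j$ and some $r = k/n \in \mathcal{T}$. I would sort the $r$-marks as $m_{\pi(1)}^r \le \cdots \le m_{\pi(n)}^r$ and take $c := m_{\pi(k)}^r$. Since not all marks equal $c$, either $m_{\pi(1)}^r < c$ or $m_{\pi(n)}^r > c$; the two cases are symmetric, so I treat the first. I would cut at $c$, assign the cake $[0, c]$ to agents $\pi(1), \ldots, \pi(k)$ and the cake $[c, 1]$ to agents $\pi(k+1), \ldots, \pi(n)$. By the sorted order, each left agent values $[0, c]$ at $\ge k/n$ with strict inequality for $\pi(1)$, and each right agent values $[c, 1]$ at $\ge (n-k)/n$. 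I would then apply the Banach--Knaster last-diminisher algorithm separately on each side to obtain a connected proportional allocation of each subcake; in each agent's own view, their piece is worth at least the local cake's value divided by the number of agents on that side, which translates globally to at least $1/n$, with strict inequality for $\pi(1)$. This yields a connected proportional allocation of the full cake in which at least one agent strictly exceeds $1/n$, and I would invoke \Cref{lem:hungry_one_agent} to upgrade it to a connected strongly-proportional allocation.

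The main obstacle I anticipate is handling ties cleanly in the sufficiency step, in particular the case $m_{\pi(k)}^r = m_{\pi(k+1)}^r$ where the cut necessarily lies at a common mark of some left and right agents, so one cannot simply perturb $c$ to get strict inequality on both sides at once. The sorted-mark viewpoint sidesteps this: the ``not all marks equal $c$'' dichotomy guarantees one strictly superior agent on one of the two sides, and the final strict-improvement step is delegated to \Cref{lem:hungry_one_agent}. A secondary bookkeeping point is the translation between local proportional shares on $[0, c]$ or $[c, 1]$ and the global $1/n$ threshold, but this is immediate from the way the sides are populated.
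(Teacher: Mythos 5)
Your proposal is correct and follows essentially the same route as the paper's proof: the sufficiency argument (cut at the $t$-th sorted $r$-mark, run a black-box proportional subroutine on each side, observe that one extreme agent strictly exceeds $1/n$, then invoke \Cref{lem:hungry_one_agent}) is the paper's argument almost verbatim. The only difference is in necessity, where the paper locates a piece $[x_{t-1},x_t]$ contained in $[z_{t-1},z_t]$ directly by a pigeonhole observation on the cut points of an arbitrary connected allocation, whereas you propagate the inequality $x_k > m_k$ by induction from the left under the assumption of strong proportionality; both variants are valid and equally elementary.
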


\begin{proof}
Since the agents are hungry, there is exactly one $r$-mark of agent $i$ for each $r \in [0, 1]$ and $i \in [n]$.

$(\Rightarrow)$ We prove the contraposition.
Suppose that for each $r \in \mathcal{T}$, every agent has the same $r$-mark.
Every agent also has the same $0$-mark of $0$ and the same $1$-mark of $1$.
For each $t \in \{0, \ldots, n\}$, denote the common $t/n$-mark by $z_t$.

Consider now any connected allocation, which is represented by $n-1$ cuts on the cake.
For each $t \in [n-1]$, denote the $t$-th cut from the left by $x_t$; also denote $x_0 = 0$ and $x_n = 1$.
Each agent receives a piece $[x_{t-1}, x_t]$ for some $t \in [n]$, and every such piece is allocated to some agent.

Since $x_0=z_0$ and $x_n=z_n$, there must be some $t\in[n]$ for which $x_{t-1}\geq z_{t-1}$ and $x_t\leq z_t$.
This means that the piece $[x_{t-1},x_t]$ is contained in the interval $[z_{t-1},z_t]$. 
Let $i$ denote the agent who receives the piece $[x_{t-1},x_t]$.
Then, agent $i$'s value for her piece is
\begin{align*}
    V_i([x_{t-1}, x_t]) &\leq V_i([z_{t-1}, z_t]) 
    = V_i([0, z_{t}]) - V_i([0, z_{t-1}]) 
    = t/n - (t-1)/n = 1/n, 
\end{align*}
so the allocation is not strongly-proportional.
This holds for any connected allocation; therefore, no connected strongly-proportional allocation exists.

$(\Leftarrow)$ Suppose that there exist two distinct agents $i, j \in [n]$ and $r \in \mathcal{T}$ such that the $r$-mark of agent $i$ is different from the $r$-mark of agent $j$.
We shall construct a connected strongly-proportional allocation by first constructing a connected \emph{proportional} allocation such that at least one agent receives a piece with value more than $1/n$, then use \Cref{lem:hungry_one_agent} to construct a strongly-proportional one.

Let $t \in [n-1]$ be the integer such that $r = t/n$.
Let $i_L$ be an agent with the leftmost (smallest) $r$-mark among all the agents, and $i_R$ be an agent with the rightmost (largest) $r$-mark among all the agents (if there are multiple agents with the same leftmost or rightmost $r$-mark, we can choose an agent arbitrarily in each case).
Denote the leftmost $r$-mark by $z_L$ and the rightmost $r$-mark by $z_R$.
Note that $z_L < z_R$, since there are agents with different $r$-marks.

Since there are $n$ agents, there are $n$ $r$-marks (possibly some of them are equal) in the interval $[z_L, z_R]$.
Let $x \in [z_L, z_R]$ be the $t$-th $r$-mark from the left.
Then, there exists a partition of the agents into two subsets $N_1$ and $N_2$ such that
\begin{itemize}
    \item $|N_1| = t$, and the $r$-mark of all agents in $N_1$ is at most $x$, and
    \item $|N_2| = n-t$, and the $r$-mark of all agents in $N_2$ is at least $x$.
\end{itemize}
Every agent in $N_1$ values $[0, x]$ at least $r$, and every agent in $N_2$ values $[x, 1]$ at least $1-r$; see \Cref{fig:hungry_equal_condition_r_marks} for an illustration.

\begin{figure}[ht]
\centering
\begin{tikzpicture}
    \node[label=below:$0$] at (0,0) (v0) {};
    \node[label=below:$1$] at (5,0) (v1) {};
    \node[label=below:$z_L$] at (2.4,0) (zL) {};
    \node at (2.5,0) (z1) {};
    \node at (2.7,0) (z2) {};
    \node at (2.9,0) (z3) {};
    \node at (3.0,0) (z4) {};
    \node at (3.2,0) (z5) {};
    \node[label=below:$z_R$] at (3.3,0) (zR) {};
    \node at (3.0,0.7) (x1) {};
    \node[label=below:$x$] at (3.0,-0.7) (x2) {};
    
    \draw [{|[width=8]}-] (v0.center) -- (zL.center);
    \draw [{|[width=8]}-] (zL.center) -- (z1.center);
    \draw [|-] (z1.center) -- (z2.center);
    \draw [|-] (z2.center) -- (z3.center);
    \draw [|-] (z3.center) -- (z4.center);
    \draw [-] (z4.center) -- (z5.center);
    \draw [|-] (z5.center) -- (zR.center);
    \draw [{|[width=8]}-{|[width=8]}] (zR.center) -- (v1.center);
    \draw [-] (x1.center) -- (x2.center);
    
    \draw [decorate,decoration={brace}] ([xshift=-2,yshift=5]zL.center) -- ([xshift=-1,yshift=5]z4.center) node[pos=-0.9, above, black]{agents in $N_1$};
    \draw [decorate,decoration={brace}] ([xshift=1,yshift=5]z4.center) -- ([xshift=2,yshift=5]zR.center) node[pos=3.6, above, black]{agents in $N_2$};
\end{tikzpicture}
\caption{The $r$-marks of all the agents in the proof of \Cref{thm:hungry_equal_condition}. The point $x$ is at one of the $r$-marks and divides agents into $N_1$ and $N_2$.}
\label{fig:hungry_equal_condition_r_marks}
\end{figure}

Next, we consider any connected proportional cake-cutting algorithm as a black box (e.g., last diminisher).
We apply the algorithm on $[0, x]$ and $N_1$ such that every agent in $N_1$ receives a connected piece with value at least $1/t$ of her value of $[0, x]$, and apply the algorithm on $[x, 1]$ and $N_2$ such that every agent in $N_2$ receives a connected piece with value at least $1/(n-t)$ of her value of $[x, 1]$.
We show that this allocation (of $C = [0, 1]$) is proportional.
For an agent in $N_1$, since she values $[0, x]$ at least $r = t/n$, the piece she receives has value at least $(1/t)r = 1/n$.
Likewise, for an agent in $N_2$, since she values $[x, 1]$ at least $1-r = (n-t)/n$, the piece she receives has value at least $(1/(n-t))(1-r) = 1/n$.

Now, we show that agent $i_L$ or $i_R$ (or both) receives a piece with value strictly more than $1/n$.
If $x = z_R$, then we claim that agent $i_L$ receives such a piece.
Since the $r$-mark of agent $i_L$ is at $z_L < x$, we have $i_L \in N_1$.
Since agent $i_L$ is hungry, the piece $[0, x]$ is worth more than $r$ to her, and so the piece she receives has value more than $(1/t)r = 1/n$.
Otherwise, $x < z_R$, and a similar argument shows that agent $i_R$ receives such a piece.

Having established a connected proportional allocation in which at least one agent receives more than $1/n$, we apply \Cref{lem:hungry_one_agent} to obtain a connected \emph{strongly-proportional} allocation.
\end{proof}

It is interesting to compare the condition in \Cref{thm:hungry_equal_condition} with the one for non-connected allocations.
In both cases, a disagreement between \emph{two} agents is sufficient for allocating \emph{all} $n$ agents more than their fair share.
However, in the non-connected case, the disagreement can be in an $r$-mark for any $r \in (0, 1)$ (see the discussion in \Cref{sec:intro}), whereas in the connected case, the disagreement should be in an $r$-mark for some $r \in \mathcal{T}$; the $r$-marks for other values of $r$ are completely irrelevant.

It is clear from \Cref{thm:hungry_equal_condition} that we can decide whether a connected strongly-proportional allocation exists for hungry agents with equal entitlements by checking the $t/n$-marks of all of the $n$ agents for all $t \in [n-1]$.
This is described in \Cref{alg:hungry_equal}.
The number of queries used in the algorithm is at most $n(n-1)$.

\begin{algorithm}[ht]
  \caption{Determining the existence of a connected strongly-proportional allocation for $n$ hungry agents with equal entitlements.}
  \label{alg:hungry_equal}
  \begin{algorithmic}[1]
    \For{$t = 1, \ldots, n-1$}
      \State $z_t \leftarrow \textsc{Mark}_1(0, t/n)$ \algorithmiccomment{agent $1$'s $t/n$-mark}
      \For{$i = 2, \ldots, n$}
        \State \algorithmicif \ $\textsc{Mark}_i(0, t/n) \neq z_t$ \algorithmicthen \ \Return true
      \EndFor
    \EndFor
    \State \Return false
  \end{algorithmic}
\end{algorithm}

\begin{theorem}
\label{thm:hungry_equal_ub}
\Cref{alg:hungry_equal} decides whether a connected strongly-proportional allocation exists for $n$ hungry agents with equal entitlements using at most $n(n-1)$ queries.
\end{theorem}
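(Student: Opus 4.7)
The plan is to verify two things: (i) the algorithm's output correctly decides the existence of a connected strongly-proportional allocation, and (ii) the number of queries used is at most $n(n-1)$.

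For correctness, I would appeal directly to \Cref{thm:hungry_equal_condition}, which states that a connected strongly-proportional allocation exists if and only if there exist two distinct agents $i, j \in [n]$ and some $r \in \mathcal{T}$ such that their $r$-marks differ. The algorithm iterates over every $r = t/n \in \mathcal{T}$; for each such $r$, it computes agent $1$'s (unique, since agents are hungry) $r$-mark $z_t$ via $\textsc{Mark}_1(0, t/n)$ and then compares it against $\textsc{Mark}_i(0, t/n)$ for every other agent $i \in \{2, \ldots, n\}$. If any comparison reveals a difference, the algorithm returns \textbf{true}. Otherwise, for every $r \in \mathcal{T}$, all agents share a common $r$-mark, in which case the algorithm returns \textbf{false}. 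To conclude, I would observe that two agents $i, j$ have different $r$-marks if and only if at least one of them has an $r$-mark differing from agent $1$'s $r$-mark $z_t$ (where the case $1 \in \{i, j\}$ is immediate, and otherwise we use transitivity: if agents $i, j \neq 1$ both match agent $1$, then they match each other). Hence the algorithm returns \textbf{true} exactly when the condition of \Cref{thm:hungry_equal_condition} holds.

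For the query bound, I would simply count the mark queries made by the algorithm. In each of the $n-1$ iterations of the outer loop, one query $\textsc{Mark}_1(0, t/n)$ is made, plus one query $\textsc{Mark}_i(0, t/n)$ per agent $i \in \{2, \ldots, n\}$, giving $1 + (n-1) = n$ queries per outer iteration. Multiplying by $n-1$ outer iterations yields at most $n(n-1)$ queries in total. (The bound is achieved in the worst case when the algorithm does not return early, i.e., when no strongly-proportional allocation exists.)

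I do not anticipate any real obstacle: correctness is essentially a direct translation of the characterization in \Cref{thm:hungry_equal_condition}, and the query count is a routine double-loop tally. The only small point requiring care is noting that comparing every agent's $r$-mark against agent $1$'s $r$-mark is sufficient to detect any pairwise disagreement, which uses that equality of marks is transitive.
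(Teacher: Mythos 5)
Your proposal is correct and matches the paper's (largely implicit) argument: correctness follows directly from \Cref{thm:hungry_equal_condition} together with the uniqueness of $r$-marks for hungry agents and the transitivity of equality, and the query count is the straightforward $(n-1)\cdot n$ tally of the double loop. No gaps.
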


Next, we show an asymptotically tight lower bound for the number of queries required to decide the existence of such an allocation for hungry agents.
The idea behind the proof is that we must check the $t/n$-marks of all the agents and all $t \in [n-1]$; otherwise, we can craft two instances---one with the $t/n$-marks coinciding, and the other with some $t/n$-marks not coinciding---that are consistent with the information obtained by the algorithm and yet give opposite results.
Doing this check requires at least $n(n-1)/2$ queries, as each query provides information on at most two points.

\begin{theorem}
\label{thm:hungry_equal_lb}
Any algorithm that decides whether a connected strongly-proportional allocation exists for $n$ hungry agents with equal entitlements requires at least $n(n-1)/2$ queries.
\end{theorem}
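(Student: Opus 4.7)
The plan is an adversary argument. I would have the adversary answer every query assuming all agents have the uniform valuation $V_i([x,y]) = y - x$ with equal entitlements $1/n$. Under this hypothetical instance every $t/n$-mark of every agent equals $t/n$, so by \Cref{thm:hungry_equal_condition} no connected strongly-proportional allocation exists, and any correct algorithm must terminate with output ``no'' on this transcript.

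For each query the algorithm issues, I would define its \emph{endpoints} to be the (at most) two distinguished points it exposes: for $\textsc{Eval}_i(x, y)$ these are $x$ and $y$, and for $\textsc{Mark}_i(x, r)$ these are $x$ and the returned point $z$ (which equals $x + r$ under the uniform answers, when this is at most $1$). A pair $(i, t) \in [n] \times [n-1]$ will be called \emph{critical} if some query about agent $i$ has an endpoint equal to $t/n$. Since each query contributes at most two endpoints and each endpoint is a single real number equal to at most one value of the form $t/n$, the number of critical pairs is at most $2Q$, where $Q$ denotes the total number of queries. Assuming for contradiction that $Q < n(n-1)/2$, fewer than $n(n-1)$ pairs are critical, so some pair $(i^*, t^*)$ is non-critical. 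I would then build an alternative instance by modifying $V_{i^*}$ locally: choose $\delta > 0$ strictly smaller than the distance from $t^*/n$ to any endpoint of any query about $i^*$ (positive by non-criticality and the finiteness of the queries, and also smaller than $\min(t^*/n, 1 - t^*/n)$), and redefine the density of $V_{i^*}$ to be $\tfrac{1}{2}$ on $[t^*/n - \delta, t^*/n]$ and $\tfrac{3}{2}$ on $[t^*/n, t^*/n + \delta]$, keeping density $1$ elsewhere. This preserves $V_{i^*}(C) = 1$ and strict monotonicity (so $i^*$ remains hungry), while moving $i^*$'s $t^*/n$-mark to $t^*/n + \delta/3$.

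The crux is verifying that every query's answer is unchanged in the modified instance. For queries about $j \neq i^*$ this is immediate. For a query about $i^*$, the choice of $\delta$ forces both endpoints of its relevant interval to lie outside $(t^*/n - \delta, t^*/n + \delta)$, so the interval is either disjoint from $[t^*/n - \delta, t^*/n + \delta]$ or contains it entirely; in either case the mass of the interval is preserved (the modification conserves total mass on $[t^*/n - \delta, t^*/n + \delta]$), so eval answers match. Because the modified $V_{i^*}$ is still strictly increasing, the rightmost point attaining a given mass is unique and coincides with the uniform answer $x + r$, so mark answers also match (including the $\infty$ case, which depends only on $V_{i^*}([x,1])$ at the unchanged endpoint $x$). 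Consequently the algorithm issues exactly the same queries and produces the same ``no'' output on both instances, yet by \Cref{thm:hungry_equal_condition} the modified instance admits a connected strongly-proportional allocation, since $i^*$'s $t^*/n$-mark now differs from every other agent's $t^*/n$-mark, which is still $t^*/n$. This contradiction yields $Q \geq n(n-1)/2$. The main obstacle I anticipate is the mark query: one must verify not only that the prescribed mass is attained at the uniform answer point, but that no \emph{rightmost} mark shifts, and this is precisely what the hungriness (strict monotonicity) of the modified $V_{i^*}$ guarantees.
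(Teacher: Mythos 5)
Your proposal is correct and follows essentially the same adversary argument as the paper: answer all queries as if every agent were uniform, observe that each query exposes at most two points (hence at most two pairs $(i, t/n)$), locate an unprobed pair when $Q < n(n-1)/2$, and perturb that agent's valuation near $t^*/n$ to shift the $t^*/n$-mark without changing any query answer, then invoke \Cref{thm:hungry_equal_condition}. The only cosmetic difference is the perturbation itself---you use a local density bump of $\tfrac{1}{2}$/$\tfrac{3}{2}$ around $t^*/n$, whereas the paper reinterpolates the valuation uniformly between all known marks plus the shifted one---but both constructions serve the same purpose and your verification that eval and rightmost-mark answers are preserved is sound.
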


\begin{proof}
Suppose by way of contradiction that some algorithm decides the existence of a connected strongly-proportional allocation for $n$ hungry agents with equal entitlements using fewer than $n(n-1)/2$ queries.
We assume that for all $i \in [n]$, $r \in [0, 1]$ and $x \in C$, $\textsc{Eval}_i(0, x)$ returns the value $x$ and $\textsc{Mark}_i(0, r)$ returns the point $r$.
We make the following adjustments to the algorithm: whenever the algorithm makes an $\textsc{Eval}_i(x, y)$ query, it is instead given the answers to $\textsc{Mark}_i(0, x) = x$ and $\textsc{Mark}_i(0, y) = y$, and whenever the algorithm makes a $\textsc{Mark}_i(x, r)$ query, it is instead given the answers to $\textsc{Mark}_i(0, x) = x$ and $\textsc{Mark}_i(0, x+r) = x+r$.\footnote{Assuming $x+r \leq 1$; otherwise, $\textsc{Mark}_i(0, x+r) = \infty$.}
This means that every query made by the algorithm provides the algorithm only with information on at most \emph{two} $r$-marks of some agent and no other information that cannot be deduced from these $r$-marks.
Note that the algorithm can still deduce the values of $\textsc{Eval}_i(x, y)$ and $\textsc{Mark}_i(x, r)$ by taking the difference between the two answers given, which means that the information provided to the algorithm after the adjustment is a superset of the information provided to the algorithm before the adjustment.

The answers given to the algorithm are consistent with the instance where every agent's valuation is uniformly distributed over the cake---in which case there is no connected strongly-proportional allocation of the cake by \Cref{thm:hungry_equal_condition}---and so the algorithm should output ``false''.
However, we shall now show that the information provided to the algorithm is also consistent with an instance with a connected strongly-proportional allocation.
This means that the algorithm is not able to differentiate between the two, resulting in a contradiction.

Since fewer than $n(n-1)/2$ queries were made by the algorithm, fewer than $n(n-1)$ $r$-marks (for $r \in (0, 1)$) of all the agents are known.
In particular, there exists an agent $i \in [n]$ such that fewer than $n-1$ $r$-marks of agent $i$ are known, and hence there exists $t \in [n-1]$ such that the $t/n$-mark of agent $i$ is not known.
We now modify agent $i$'s valuation function slightly from the uniform distribution.
Let $\epsilon \in (0, 1/n)$ be a number such that every known $r$-mark of agent $i$ is of distance more than $\epsilon$ from $t/n$.
Let the $t/n$-mark of agent $i$ to be at $t/n + \epsilon$.
Construct agent $i$'s valuation function such that its distribution between all known $r$-marks of agent $i$ (including the new $t/n$-mark) is uniform within the respective intervals---note that this construction is valid and unique since these known $r$-marks are strictly increasing in $r$.
Let the other agents' valuation functions be uniformly distributed on the whole cake.
Then, agent $i$'s $t/n$-mark is different from every other agents' $t/n$-mark.
By \Cref{thm:hungry_equal_condition}, this instance admits a connected strongly-proportional allocation of the cake, forming the desired contradiction.
\end{proof}

\Cref{thm:hungry_equal_ub,thm:hungry_equal_lb} show that the number of queries required to determine the existence of a connected strongly-proportional allocation for $n$ hungry agents with equal entitlements is in $\Theta(n^2)$.
The same can be said for \emph{computing} such an allocation---we can modify \Cref{alg:hungry_equal} using the details in the proof of \Cref{thm:hungry_equal_condition} to output a connected strongly-proportional allocation of the cake instead, if such an allocation exists.

\begin{theorem}
\label{thm:hungry_equal_tight}
The number of queries required to decide the existence of a connected strongly-proportional allocation for $n$ hungry agents with equal entitlements, or to compute such an allocation if it exists, is in $\Theta(n^2)$.
\end{theorem}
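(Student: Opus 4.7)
The plan is to observe that the decision half of the statement is essentially already packaged: the upper bound $n(n-1) = O(n^2)$ comes directly from \Cref{thm:hungry_equal_ub} (\Cref{alg:hungry_equal}), and the matching lower bound $n(n-1)/2 = \Omega(n^2)$ comes directly from \Cref{thm:hungry_equal_lb}. So for the decision version there is nothing further to prove beyond citing these two theorems.

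For the computation half, I would first argue that the lower bound transfers for free: any algorithm that computes a connected strongly-proportional allocation when one exists (and signals nonexistence otherwise) in particular decides existence, so it inherits the $\Omega(n^2)$ lower bound from \Cref{thm:hungry_equal_lb}. The remaining task is therefore an $O(n^2)$ upper bound on computation, for which I would make the proof of \Cref{thm:hungry_equal_condition} explicitly algorithmic.

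Concretely, I would modify \Cref{alg:hungry_equal} so that, while scanning $t = 1, \ldots, n-1$ and querying $\textsc{Mark}_i(0, t/n)$ for every $i \in [n]$, it also records, for each $t$, the leftmost and rightmost $t/n$-marks together with the agents achieving them. This preprocessing costs $n(n-1) = O(n^2)$ queries. If no two $t/n$-marks disagree for any $t$, the algorithm returns ``no such allocation'' by \Cref{thm:hungry_equal_condition}. Otherwise, pick any witnessing $t$ and follow the sufficiency construction in the proof of \Cref{thm:hungry_equal_condition}: sort the stored $t/n$-marks to locate the $t$-th one $x$ from the left and read off the partition $(N_1, N_2)$ from the already-recorded data at zero additional query cost. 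Then run the last-diminisher algorithm on $([0,x], N_1)$ and on $([x,1], N_2)$, each of which costs $O(n \log n) = O(n^2)$ queries, yielding a connected proportional allocation in which at least one agent (namely $i_L$ or $i_R$) receives strictly more than $1/n$. Finally, invoke \Cref{lem:hungry_one_agent} to upgrade this to a strongly-proportional allocation using $O(n)$ further queries.

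Summing the stages gives an $O(n^2)$ query bound for computation, matching the lower bound and establishing $\Theta(n^2)$ for both tasks. I do not expect any serious obstacle: the work is essentially bookkeeping to make sure the preprocessing phase already supplies the data needed for the construction step, and to verify that the inserted call to a connected-proportional subroutine fits inside the $O(n^2)$ budget. The only mild care point is confirming that reusing the already-queried $t/n$-marks to identify $x$, $N_1$, $N_2$, $i_L$, and $i_R$ does not require any additional Robertson--Webb queries, which is immediate from the definition of those quantities.
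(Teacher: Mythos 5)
Your proposal is correct and follows essentially the same route as the paper, which likewise combines \Cref{thm:hungry_equal_ub} and \Cref{thm:hungry_equal_lb} for the decision bound and obtains the computational upper bound by making the sufficiency construction of \Cref{thm:hungry_equal_condition} (plus \Cref{lem:hungry_one_agent}) algorithmic on top of \Cref{alg:hungry_equal}. The only nitpick is that the last-diminisher algorithm actually uses $\Theta(n^2)$ rather than $O(n \log n)$ queries (the latter is Even--Paz), but either subroutine fits within the $O(n^2)$ budget, so the argument stands.
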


\subsection{Possibly Unequal Entitlements}
\label{sec:hungry_unequal}

We now consider hungry agents who may not necessarily have equal entitlements.
Since the entitlement of a subset of agents may not be a multiple of $1/n$, we cannot use the condition in \Cref{thm:hungry_equal_condition} which uses $r$-marks for $r \in \mathcal{T}$.
This requires us to devise a more general condition to determine the existence of a connected strongly-proportional allocation, which can be checked using $O(n \cdot 2^n)$ queries.
Since the condition also works for non-hungry agents, we defer the discussion to \Cref{sec:general_ub} (see \Cref{thm:general_condition,thm:general_improved_ub}).

We now show an asymptotically-tight \emph{lower bound} for the case when agents may have unequal entitlements.
We show an even stronger result: for every vector of \emph{generic entitlements}, the number of queries required to decide the existence of a connected strongly-proportional allocation is in $\Omega(n \cdot 2^n)$.
The proof uses an adversarial argument similar to the one in \Cref{thm:hungry_equal_lb}.

\begin{theorem}
\label{thm:hungry_unequal_lb}
Let $\mathbf{w}$ be any vector of generic entitlements.
Then, any algorithm that decides whether a connected strongly-proportional allocation exists for $n$ hungry agents with entitlements $\mathbf{w}$ requires at least $n \cdot 2^{n-2}$ queries.
\end{theorem}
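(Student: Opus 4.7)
The plan is to adapt the adversary argument of \Cref{thm:hungry_equal_lb} to generic entitlements, replacing the critical values $1/n,2/n,\ldots,(n-1)/n$ by the values $w_N$ for $N\subseteq [n]\setminus\{i\}$. Suppose for contradiction that some algorithm decides the existence of a connected strongly-proportional allocation using fewer than $n\cdot 2^{n-2}$ queries. The adversary answers every query as if each agent had the uniform density on $C$, so $\textsc{Eval}_i(x,y)$ returns $y-x$ and $\textsc{Mark}_i(x,r)$ returns $x+r$ (or $\infty$ when $x+r>1$). Under this uniform instance, the marking process from the characterization promised in \Cref{sec:general} ends at exactly $1$ for every permutation, so no connected strongly-proportional allocation exists and the algorithm must answer ``false''.

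As in the proof of \Cref{thm:hungry_equal_lb}, I reduce each query to at most two absolute-mark queries of the form $\textsc{Mark}_j(0,r)$; each such query, under the uniform answers, reveals that $V_j([0,r])=r$. Both absolute queries spawned by a single original query target the same agent, so if the algorithm directs $q_j$ queries at agent $j$, it learns at most $2q_j$ point-value pairs for that agent. For each agent $i$, call the $w_N$-mark of agent $i$ a \emph{critical mark} for every $N\subseteq [n]\setminus\{i\}$; by the genericity of $\mathbf{w}$ these $2^{n-1}$ values $w_N$ are pairwise distinct. Excluding $N=\emptyset$, which corresponds to the trivially known point $0$, agent $i$ has $2^{n-1}-1$ critical marks that must be uncovered for the algorithm to distinguish the uniform instance from the perturbations below. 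Covering all of them for agent $i$ requires $2q_i\geq 2^{n-1}-1$, i.e., $q_i\geq 2^{n-2}$ (since $2^{n-1}-1$ is odd for $n\geq 2$). Summing over agents, any algorithm using fewer than $n\cdot 2^{n-2}$ queries must have some agent $i$ with $q_i<2^{n-2}$; for this agent, at least one critical $w_N$-mark with $N\neq\emptyset$ is unknown.

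I now construct the perturbation. Modify only agent $i$'s valuation in a small neighborhood of position $w_N$: choose $\delta>0$ small enough that $[w_N-\delta,w_N+\delta]$ contains no revealed position for agent $i$ and no other $w_M$, which is possible because the entitlements are generic and only finitely many positions are involved. Set the density of $V_i$ to $1-\beta$ on $[w_N-\delta,w_N]$ and to $1+\beta$ on $[w_N,w_N+\delta]$ for a small $\beta\in(0,1)$, and keep $V_i$ uniform elsewhere while leaving every other agent uniform. Then $V_i$ remains strictly increasing (so agent $i$ is still hungry), every previously revealed pair is preserved (so all query answers are still consistent), and $V_i([0,w_N])=w_N-\beta\delta$. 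For the permutation that places the agents of $N$ first (in any order), then agent $i$, then the remaining agents, the marking process gives endpoint $w_N$ after $N$; agent $i$'s mark is the point $x$ with $V_i([0,x])=w_N-\beta\delta+w_i$, so $x=w_N+w_i-\beta\delta<w_N+w_i$ because $V_i$ is uniform beyond $w_N+\delta$; and the subsequent uniform marks add up to a final position $1-\beta\delta<1$. By the characterization, the perturbed instance admits a connected strongly-proportional allocation, so the algorithm should output ``true'', contradicting the fact that its query transcript on this instance is identical to the one on the uniform instance.

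The main obstacle is to engineer the perturbation so that simultaneously (i) every previously revealed absolute-mark pair is preserved, (ii) $V_i$ remains strictly increasing, and (iii) at least one permutation's marking process ends strictly below $1$. Generic entitlements are crucial in (i): they guarantee that the unknown $w_N$ is isolated from every other $w_M$ and from every revealed position, so the perturbation's support can be shrunk to overlap none of them.
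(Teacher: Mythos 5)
Your proposal is correct and follows essentially the same adversary argument as the paper's proof: answer all queries as if every agent were uniform, reduce each query to at most two absolute marks, pigeonhole over the $2^{n-1}-1$ values $w_N$ with $\emptyset \neq N \subseteq [n]\setminus\{i\}$ to find an under-queried agent $i$ and an unrevealed position $w_N$, and perturb agent $i$'s valuation locally there. The only divergences are in execution---the paper realizes the perturbation by re-interpolating piecewise-uniformly through the revealed marks and concludes via an explicit connected proportional allocation together with \Cref{lem:hungry_one_agent}, whereas you use a mass-preserving local density bump (which keeps $V_i([0,y])=y$ outside $[w_N-\delta,w_N+\delta]$ and makes the consistency check immediate) and conclude via the permutation characterization of \Cref{thm:general_condition}---and both routes are valid.
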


\begin{proof}
Since the entitlements are generic, we can arrange the $2^n$ different subsets of agents in strictly increasing order of their entitlements, i.e., we label the subsets of $[n]$ as $N_1, \ldots, N_{2^n}$ such that $w_{N_1} < \cdots < w_{N_{2^n}}$.
Note that $N_1 = \emptyset$ and $N_{2^n} = [n]$, giving $w_{N_1} = 0$ and $w_{N_{2^n}} = 1$.

Let $d = \min_{k=1}^{2^n-1} (w_{N_{k+1}} - w_{N_k})$ be the smallest gap between entitlements of different agent subsets.
For each $k \in \{2, \ldots, 2^n-1\}$, define $I_k = [w_{N_k},~ w_{N_k}+d/2]$.
Note that, by the choice of $d$, all the $I_k$ are pairwise disjoint.

Suppose by way of contradiction that some algorithm decides the existence of a connected strongly-proportional allocation for $n$ hungry agents with generic entitlements using fewer than $n \cdot 2^{n-2}$ queries.
We follow the construction in the proof of \Cref{thm:hungry_equal_lb} where we modify the algorithm such that every query returns information on at most \emph{two} $r$-marks of some agent, and these information are consistent with the instance where every agent's valuation is uniformly distributed over the cake.
Therefore, the algorithm should output ``false''.
We shall now show that the information provided to the algorithm is also consistent with an instance with a connected strongly-proportional allocation.
This means that the algorithm is not able to differentiate between the two, resulting in a contradiction.

Since fewer than $n \cdot 2^{n-2}$ queries were made by the algorithm, there exists an agent $i \in [n]$ such that at most $2^{n-2} - 1$ queries about the $r$-marks of agent $i$ (for $r \in (0, 1)$) are made.
Since each query returns information on at most \emph{two} $r$-marks, at most $2^{n-1} - 2$ $r$-marks of agent $i$ are known.
There are $2^{n-1} - 1$ non-empty subsets $N_k$ of $[n]$ that do not contain agent $i$, so there exists $k \in \{2, \ldots, 2^n-1\}$ such that $i \notin N_k$ 
and no known $r$-mark of agent $i$ is in the interval $I_k$.
Let $w = w_{N_k}$.
Let the $w$-mark of agent $i$ be at $w + d/4$.
Construct agent $i$'s valuation function such that its distribution between all known $r$-marks of agent $i$ (including the new $w$-mark) is uniform within the respective intervals---note that this construction is valid and unique since these known $r$-marks are strictly increasing in $r$.
Let the other agents' valuation functions be uniformly distributed on the whole cake.

We show that a connected strongly-proportional allocation exists.
The leftmost pieces are allocated to agents in $N_k$ in any arbitrary order, where every agent $j \in N_k$ receives a piece of length $w_j$.
Agent $i$ receives the piece $[w, w+w_i]$.
Finally, the remaining cake is allocated to the remaining agents such that every agent $j$ receives a piece of length $w_j$.
Note that every agent $j \in [n] \setminus \{i\}$ receives a piece worth exactly $w_j$, since their valuation functions are uniform.
The value of $[w + d/4, w + w_i]$ is $w_i$ to agent $i$, so agent $i$'s piece $[w, w+w_i] \supsetneq [w+d/4, w+w_i]$ is worth more than $w_i$ to hungry agent $i$.
Therefore, the allocation is proportional (and clearly connected) with agent $i$ receiving a piece strictly greater than $w_i$.
By \Cref{lem:hungry_one_agent}, a connected strongly-proportional allocation of the cake exists, forming the desired contradiction.
\end{proof}

Using the results from \Cref{thm:hungry_unequal_lb} and from \Cref{thm:general_improved_ub} later, we get a tight bound for hungry agents with possibly unequal entitlements.

\begin{theorem}
\label{thm:hungry_unequal_tight}
The number of queries required to decide the existence of a connected strongly-proportional allocation for $n$ hungry agents, or to compute such an allocation if it exists, is in $\Theta(n \cdot 2^n)$.
\end{theorem}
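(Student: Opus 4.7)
The plan is to assemble the bound from the two pieces already available: \Cref{thm:hungry_unequal_lb} gives the lower bound, and \Cref{thm:general_improved_ub} (stated later but applicable here) gives the upper bound. Since both results already handle the hungry setting, the proof should be short, essentially a one-line combination.

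For the upper bound, I would invoke \Cref{thm:general_improved_ub}, which provides an algorithm that decides the existence of a connected strongly-proportional allocation using $O(n \cdot 2^n)$ queries for \emph{arbitrary} agents with arbitrary entitlements. Since hungry agents form a subclass of general agents, the same bound applies. Moreover, the algorithm from \Cref{thm:general_improved_ub} is constructive: whenever it concludes that an allocation exists, it produces a witnessing permutation of agents from which a connected strongly-proportional allocation can be assembled within the same asymptotic query complexity. This takes care of both the decision and the computation versions of the problem.

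For the lower bound, I would apply \Cref{thm:hungry_unequal_lb}, which states that for any fixed vector of generic entitlements, deciding the existence of a connected strongly-proportional allocation requires at least $n \cdot 2^{n-2} = \Omega(n \cdot 2^n)$ queries even when all agents are hungry. To obtain the bound as stated in the theorem (which is a worst-case bound over hungry instances with possibly unequal entitlements), I only need to exhibit a single such vector of generic entitlements; for example, $w_i = 2^{i-1}/(2^n - 1)$ for $i \in [n]$ yields distinct subset sums, and therefore the lower bound from \Cref{thm:hungry_unequal_lb} holds on this family of instances. The computation lower bound follows a fortiori, since any algorithm that computes an allocation (or reports none exists) also decides existence.

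The main ``obstacle'' is purely expository: since the upper bound is proved only later in \Cref{sec:general}, the theorem statement here is a forward reference, and the proof must be phrased so that it does not create a circularity. This is not actually an issue, as \Cref{thm:general_improved_ub} does not depend on any result from \Cref{sec:hungry_unequal}. Combining $\Omega(n \cdot 2^n)$ and $O(n \cdot 2^n)$ yields the claimed $\Theta(n \cdot 2^n)$ bound for both the decision and the computation problems, completing the proof.
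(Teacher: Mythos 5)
Your proposal is correct and follows essentially the same route as the paper: the lower bound is imported from \Cref{thm:hungry_unequal_lb} (instantiated at any generic entitlement vector, such as the powers-of-two example you give), the upper bound from \Cref{alg:general_improved} via \Cref{thm:general_improved_ub}, and the computation version is handled by making the algorithm constructive using the second half of the proof of \Cref{thm:general_condition}. Nothing further is needed.
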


The lower bound in \Cref{thm:hungry_unequal_lb} is derived from the number of different values of $w_{N_k}$.
In particular, a lower bound number of queries is
\begin{align}
\label{eq:hungry_unequal_lb}
    \frac{1}{2} \sum_{i=1}^n ~ |\{ w_N : \emptyset \neq N \subseteq [n], i \notin N \}|.
\end{align}

For \emph{generic} entitlements, each term in the sum equals $2^{n-1}-1$, so we get roughly the lower bound of $n \cdot 2^{n-2}$ in \Cref{thm:hungry_unequal_lb}.
In contrast, for \emph{equal} entitlements, each term in the sum equals $n-1$, so we get the lower bound of $n(n-1)/2$ in \Cref{thm:hungry_equal_lb}.

For entitlements that are neither generic nor equal, the resulting lower bound is between these two extremes.
It is an interesting open question to find an algorithm with a query complexity matching the lower bound in \eqref{eq:hungry_unequal_lb} in these intermediate cases.
The main difficulty in extending our algorithm for equal entitlements (\Cref{alg:hungry_equal}) to unequal entitlements is due to the step in \Cref{thm:hungry_equal_condition} where we used a black-box algorithm for \emph{proportional} cake-cutting (such as last diminisher) to divide a part of the cake among the agents in $N_1$ and the other part among the agents in $N_2$.
Such a black box algorithm does not exist for unequal entitlements, since a connected proportional allocation might not even exist for unequal entitlements in the first place.

\section{General Agents}
\label{sec:general}

We now consider the general case where agents need not be hungry.
Recall that the condition we developed in \Cref{thm:hungry_equal_condition} involves checking for the coincidence of $r$-marks of all the agents for $r \in \mathcal{T}$.
However, there are some difficulties in generalizing the condition for non-hungry agents, even for equal entitlements.
The proof of \Cref{thm:hungry_equal_condition} relies crucially on the fact that an $r$-mark of an agent is unique, which may not be true for non-hungry agents.
For each agent $i \in [n]$, $F_i(x) = V_i([0, x])$ is a continuous function with domain $C = [0, 1]$ and range $[0, 1]$.
For each $r \in [0, 1]$, the set of $r$-marks of agent $i$ is $F_i^{-1}(\{r\})$.
Since $\{r\}$ is a closed set and $F_i$ is continuous, $F_i^{-1}(\{r\})$ is a non-empty closed set.
If agent $i$ is not necessarily hungry, then the fact that $F_i$ is non-decreasing implies the set of $r$-marks of agent $i$ is thus a non-empty closed \emph{interval} (though possibly the singleton set $[x, x] = \{x\}$).

Another difficulty is that there may be different instances with the same $t/n$-marks but give different results regarding the existence of such an allocation.
We show this via the following two examples.

\begin{example}
\label{eg:three_agents}
Consider a cake-cutting instance for $n = 3$ agents with equal entitlements where the cake is made up of $11$ homogenous regions.
The following table shows the agents' valuations for each region.

\begin{center}
\begin{tabular}{|l|ccccccccccc|}
\hline
\textbf{Alice} & 9 & 0 & 0 & 0 & 9 & 0 & 0 & 0 & 0 & 0 & 9  \\
\hline
\textbf{Bob}   & 1 & 4 & \multicolumn{1}{c|}{4} & 3 & 1 & \multicolumn{1}{c|}{5} & 1 & 1 & 2 & 4 & 1 \\
\hline
\textbf{Chana} & 1 & \multicolumn{1}{c|}{8} & 2 & 2 & 1 & 1 & 1 & \multicolumn{1}{c|}{2} & 4 & 4 & 1 \\
\hline
\end{tabular}
\end{center}
All agents value\footnote{
  The value of the cake should technically be normalized to $1$, but this can be done by simply dividing every value by $27$.
  We use integers here and in all subsequent examples for simplicity.
}
the entire cake at $27$, so the $t/n$-marks are at values $9$ and $18$.
Alice has two intervals of $t/n$-marks---the two intervals of zeros.
Bob and Chana each has two $t/n$-marks that are single points, denoted by vertical lines---note that both Bob and Chana are hungry.
We show that no connected strongly-proportional allocation exists.

Suppose by way of contradiction that a connected strongly-proportional allocation exists.
Alice must receive a piece with value larger than $9$, so her piece must touch the middle $9$ as well as either the left $9$ or the right $9$. In the former case, the cake remaining for Bob and Chana is at most:
\begin{center}
\begin{tabular}{|l|ccccccc|}
\hline
\textbf{Bob}   & 1 & 5 & 1 & 1 & 2 & 4 & 1 \\
\hline
\textbf{Chana} & 1 & 1 & 1 & 2 & 4 & 4 & 1 \\
\hline
\end{tabular}
\end{center}
In the latter case, the remaining cake is at most:
\begin{center}
\begin{tabular}{|l|ccccc|}
\hline
\textbf{Bob}   & 1 & 4 & 4 & 3 & 1 \\
\hline
\textbf{Chana} & 1 & 8 & 2 & 2 & 1 \\
\hline
\end{tabular}
\end{center}
In both cases, no matter how the remaining cake is divided between Bob and Chana, at least one agent gets a piece of cake with value at most $9$, so no connected strongly-proportional allocation exists.
\end{example}

\begin{example}
\label{eg:three_agents_positive}
Consider the following instance modified from \Cref{eg:three_agents}.

\begin{center}
\begin{tabular}{|l|ccccccccccc|}
\hline
\textbf{Alice} & 9 & 0 & 0 & 0 & 9 & 0 & 0 & 0 & 0 & 0 & 9  \\
\hline
\textbf{Bob}   & 1 & 4 & \multicolumn{1}{c|}{4} & 3 & 1 & \multicolumn{1}{c|}{5} & \textcolor{red}{5} & \textcolor{red}{1} & \textcolor{red}{1} & \textcolor{red}{1} & 1 \\
\hline
\textbf{Chana} & 1 & \multicolumn{1}{c|}{8} & 2 & 2 & 1 & 1 & 1 & \multicolumn{1}{c|}{2} & 4 & 4 & 1 \\
\hline
\end{tabular}
\end{center}
The $t/n$-marks of the agents are identical to those in \Cref{eg:three_agents}. However, a connected strongly-proportional allocation exists, as the following table shows:

\begin{center}
\begin{tabular}{|l|ccccc|cc|cccc|}
\hline
\textbf{Alice} & 9 & 0 & 0 & 0 & 9 &  &  &  &  &  & \\
\hline
\textbf{Bob}   &  &  &  &  &  & 5 & 5 &  &  &  & \\
\hline
\textbf{Chana} &  &  &  &  &  &  &  & 2 & 4 & 4 & 1 \\
\hline
\end{tabular}
\end{center}
\end{example}

\Cref{eg:three_agents,eg:three_agents_positive} show that the condition for determining the existence of a connected strongly-proportional allocation cannot be extended trivially from the result for hungry agents.
Instead, let us discuss the extent to which the results from \Cref{sec:hungry_equal} can be extended.
We start with a necessary condition regarding the $r$-marks for $r \in \mathcal{T}$.
This condition is similar to that in \Cref{thm:hungry_equal_condition}.

\begin{proposition}[Necessary condition]
\label{prop:necessary}
Let an instance with $n$ agents with equal entitlements be given.
If there exists a connected strongly-proportional allocation, then there exist two distinct agents $i, j \in [n]$ and $r \in \mathcal{T}$ such that the interval of $r$-marks of agent $i$ is disjoint%
\footnote{Unlike for pieces of cake where ``disjoint'' means \emph{finite} intersection, we revert to the standard definition of ``disjoint'' to mean \emph{empty} intersection for intervals involving $r$-marks.}
from the interval of $r$-marks of agent $j$.
\end{proposition}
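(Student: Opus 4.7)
The plan is to prove the contrapositive: assume that for every $r \in \mathcal{T}$ the intervals of $r$-marks of any two agents intersect, and show that no connected strongly-proportional allocation can exist. This mirrors the structure of the necessity direction of \Cref{thm:hungry_equal_condition}, but with a small extra ingredient needed because the $r$-marks are now intervals rather than points.

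First I would collapse the pairwise-intersection assumption into a common-point statement. For each $r \in \mathcal{T}$, the set $F_i^{-1}(\{r\})$ is a closed interval (as noted in the paragraph preceding the proposition), so we have $n$ closed intervals on the line that pairwise intersect. By the one-dimensional Helly property (equivalently, the elementary fact that pairwise intersecting intervals of $\mathbb{R}$ have nonempty common intersection), there exists a point $z_t$ that is simultaneously a $(t/n)$-mark of every agent, for each $t \in \{1, \dots, n-1\}$. Setting $z_0 := 0$ and $z_n := 1$ extends this to $t \in \{0, \ldots, n\}$. Moreover, since $F_i(z_t) = t/n$ for every agent $i$ and $F_i$ is non-decreasing, the $z_t$'s are strictly increasing: $0 = z_0 < z_1 < \cdots < z_n = 1$.

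Next I would run the pigeonhole argument from the proof of \Cref{thm:hungry_equal_condition} verbatim. Consider any connected allocation with cuts $0 = x_0 \leq x_1 \leq \cdots \leq x_n = 1$. Since $x_0 = z_0$ and $x_n = z_n$, there must be some $t \in [n]$ with $x_{t-1} \geq z_{t-1}$ and $x_t \leq z_t$, so the piece $[x_{t-1}, x_t]$ is contained in $[z_{t-1}, z_t]$. If agent $i$ receives this piece, then
\begin{align*}
V_i([x_{t-1}, x_t]) \leq V_i([z_{t-1}, z_t]) = F_i(z_t) - F_i(z_{t-1}) = \tfrac{t}{n} - \tfrac{t-1}{n} = \tfrac{1}{n},
\end{align*}
so agent $i$ does not strictly exceed her proportional share, and the allocation fails to be strongly-proportional.

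The only conceptually new step compared to the hungry case is the invocation of the 1D Helly property to replace "common $r$-mark" by "common point in the pairwise-intersecting $r$-mark intervals." Everything else is a direct reuse of the earlier argument, and there is no obstacle beyond verifying that the closed intervals $F_i^{-1}(\{t/n\})$ are nonempty (which follows from continuity of $F_i$ with $F_i(0)=0$ and $F_i(1)=1$) and closed (which follows from continuity as well). I do not expect any technical difficulty; the contrapositive framing, Helly in 1D, and the pigeonhole cut argument together close the proof cleanly.
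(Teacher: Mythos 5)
Your proof is correct, but it takes a genuinely different route from the paper's. You reduce the non-hungry case to the hungry one: by the one-dimensional Helly property, the pairwise-intersecting closed intervals $F_i^{-1}(\{t/n\})$ have a common point $z_t$, after which the pigeonhole argument over the cuts $x_0 \leq \cdots \leq x_n$ from the necessity direction of \Cref{thm:hungry_equal_condition} applies verbatim; all the steps you cite (non-emptiness and closedness of $F_i^{-1}(\{t/n\})$, strict monotonicity of the $z_t$, existence of $t$ with $[x_{t-1},x_t]\subseteq[z_{t-1},z_t]$) check out. The paper instead argues by contradiction with a backward induction indexed by the left-to-right order $\sigma$ of the pieces: it shows that every agent among $\sigma(1),\ldots,\sigma(k)$ values the leftmost $k$ pieces at most $k/n$, using at each step only the single pairwise intersection between agent $\sigma(i)$'s and agent $\sigma(k+1)$'s intervals of $k/n$-marks, and never assembles a common point for all $n$ agents. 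Your version buys modularity and a stronger structural conclusion (a single sequence of universal cut points $z_0 < z_1 < \cdots < z_n$ witnessing the failure of strong proportionality for \emph{every} connected allocation at once), at the cost of invoking Helly; the paper's version is more self-contained and uses the hypothesis in exactly the pairwise form in which it is stated. Either proof is acceptable.
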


\begin{proof}
Let a connected strongly-proportional allocation be given, and let $\sigma : [n] \to [n]$ be the permutation such that agent $\sigma(k)$ receives the $k$-th piece from the left.
Suppose by way of contradiction that for each $r \in \mathcal{T}$, the intervals of $r$-marks of every pair of agents have non-empty intersection.
We show by backward induction that for each $k \in [n]$, every agent in $\{\sigma(1), \ldots, \sigma(k)\}$ assigns a total value of at most $k/n$ to the leftmost $k$ pieces.

The base case of $k = n$ is clear---every agent assigns a value of at most $n/n = 1$ to the whole cake.
Suppose that the statement is true for $k+1$ for some $k \in [n-1]$; we shall prove the statement for $k$.
Since agent $\sigma(k+1)$ assigns a value of at most $(k+1)/n$ to the leftmost $k+1$ pieces, the left endpoint of her piece must be strictly to the left of her interval of $k/n$-marks in order for her to receive a piece worth more than $1/n$.
Now, consider agent $\sigma(i)$ for $i \in [k]$.
Since agent $\sigma(i)$'s interval of $k/n$-marks intersects with agent $\sigma(k+1)$'s interval of $k/n$-marks, the remaining cake after removing $\sigma(k+1)$'s piece is worth at most $k/n$ to agent $\sigma(i)$.
This proves the inductive statement.

Now, the statement for $k = 1$ states that agent $\sigma(1)$ receives a piece worth at most $1/n$. This contradicts the assumption that the allocation is strongly-proportional.
\end{proof}

Next, we provide a sufficient condition for a connected strongly-proportional allocation using intervals of $r$-marks for $r \in \mathcal{T}$.
It differs from the necessary condition of \Cref{prop:necessary} in that it requires the intervals of $r$-marks of \emph{all} agents, rather than just two, to be pairwise disjoint.

\begin{proposition}[Sufficient condition]
\label{prop:sufficient}
Let an instance with $n$ agents with equal entitlements be given.
If there exists $r \in \mathcal{T}$ such that the intervals of $r$-marks of all agents are pairwise disjoint, then a connected strongly-proportional allocation exists.
\end{proposition}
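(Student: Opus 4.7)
The plan is to reduce the problem to two independent connected \emph{proportional} cake-cutting problems on disjoint sub-cakes. Let $r = t/n \in \mathcal{T}$ be the value guaranteed by the hypothesis, and for each agent $i$, let $[a_i, b_i] := F_i^{-1}(\{r\})$ denote its (non-empty, closed) interval of $r$-marks. Since these $n$ intervals are pairwise disjoint closed subsets of $C$, they are totally ordered along the line, so there is a permutation $\pi$ of $[n]$ such that $b_{\pi(1)} < a_{\pi(2)}$, $b_{\pi(2)} < a_{\pi(3)}$, $\ldots$, $b_{\pi(n-1)} < a_{\pi(n)}$. In particular, since $1 \le t \le n-1$, the open gap $(b_{\pi(t)},\, a_{\pi(t+1)})$ is non-empty, so we can pick any point $x$ in this gap.

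The intention is to allocate connected pieces of $[0,x]$ to the agents $\pi(1),\ldots,\pi(t)$ and connected pieces of $[x,1]$ to the agents $\pi(t+1),\ldots,\pi(n)$. The first key step is to verify that $V_{\pi(k)}([0,x]) > t/n$ for $k \le t$, and symmetrically $V_{\pi(k)}([x,1]) > (n-t)/n$ for $k > t$. This is where the precise definition of $[a_i,b_i]$ is used: $b_{\pi(k)}$ is the \emph{rightmost} $r$-mark of agent $\pi(k)$, so by monotonicity of $F_{\pi(k)}$ together with $F_{\pi(k)}(b_{\pi(k)}) = r$, any $x > b_{\pi(k)}$ must satisfy $F_{\pi(k)}(x) > r$; the symmetric argument with $a_{\pi(k)}$ handles the right-hand agents.

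Having secured these strict inequalities, I would invoke a classical connected proportional cake-cutting algorithm (e.g.\ the last-diminisher algorithm of \citet{steinhaus1948problem}, whose guarantee requires only continuous additive non-negative valuations and not hungriness) on the sub-cake $[0,x]$ among the $t$ agents $\{\pi(1),\ldots,\pi(t)\}$ with equal entitlements $1/t$. Each such agent $\pi(k)$ then receives a connected piece of value at least $V_{\pi(k)}([0,x])/t > (t/n)/t = 1/n$. Applying the same procedure to $[x,1]$ among the remaining $n-t$ agents gives each a connected piece of value at least $V_{\pi(k)}([x,1])/(n-t) > ((n-t)/n)/(n-t) = 1/n$. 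Concatenating the two sub-allocations yields the desired connected strongly-proportional allocation of $C$.

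I do not anticipate a serious obstacle here; the delicate point is simply to verify the strict inequalities above, since any loosening of strictness would collapse the argument (the agents could end up with exactly $1/n$ rather than strictly more). Once that is in hand, the black-box invocation of last-diminisher is routine. Note also that unlike the sufficient direction of \Cref{thm:hungry_equal_condition}, the present argument produces strict values directly from the sub-problem proportionality guarantees, so \Cref{lem:hungry_one_agent} is not needed---and indeed could not be applied here since the agents need not be hungry.
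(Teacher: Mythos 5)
Your proposal is correct and follows essentially the same route as the paper's proof: choose a separating point $x$ in the gap after the $t$-th interval of $r$-marks, observe that the left $t$ agents value $[0,x]$ strictly more than $t/n$ and the right $n-t$ agents value $[x,1]$ strictly more than $(n-t)/n$, and then apply a black-box connected proportional algorithm to each side so that the strict inequalities propagate to values strictly above $1/n$. Your explicit justification of the strict inequalities via the rightmost/leftmost $r$-marks and your remark that \Cref{lem:hungry_one_agent} is neither needed nor applicable here are both accurate.
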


\begin{proof}
Let $r = t/n$ be such that the intervals of $r$-marks of all the agents are pairwise disjoint.
Since these intervals of $r$-marks are closed and are pairwise disjoint, we can arrange them from smallest to largest.
Moreover, the gap between any two consecutive intervals of $r$-marks is a non-empty open interval.
Let $N_1$ be the set of $t$ agents whose intervals of $r$-marks are the smallest, and let $N_2$ be the remaining agents.
Then, there exists a point $x$ between the $r$-marks of the agents in $N_1$ and that of the agents in $N_2$.
Note that the $t$ agents in $N_1$ each values the cake $[0, x]$ more than $t/n$, and the $n-t$ agents in $N_2$ each values the cake $[x, 1]$ more than $(n-t)/n$.
We apply any connected proportional cake-cutting algorithm on each of $[0, x]$ on $N_1$ and $[x, 1]$ on $N_2$ such that every agent receives a connected piece worth more than $1/n$.
This gives a connected strongly-proportional allocation.
\end{proof}

\Cref{prop:necessary,prop:sufficient} coincide for $n = 2$ agents, yielding the following result.

\begin{corollary}
\label{cor:two_general_agents}
Let an instance with two agents with equal entitlements be given.
Then, a connected strongly-proportional allocation exists if and only if the intervals of $1/2$-marks of the two agents are disjoint.
\end{corollary}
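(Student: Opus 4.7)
The plan is to observe that the corollary is an immediate specialization of \Cref{prop:necessary,prop:sufficient} to the case $n = 2$, where the otherwise-distinct necessary and sufficient conditions collapse to exactly the same statement. First I would note the two structural simplifications that occur when $n = 2$: we have $\mathcal{T} = \{1/n, \ldots, (n-1)/n\} = \{1/2\}$, so any quantification over $r \in \mathcal{T}$ is just a condition on $r = 1/2$; and with only two agents there is a unique pair of distinct agents, so the quantifiers "some pair is disjoint" and "every pair is pairwise disjoint" become literally the same condition on the two $1/2$-mark intervals.

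For the ($\Rightarrow$) direction I would invoke \Cref{prop:necessary}: a connected strongly-proportional allocation gives two distinct agents $i, j$ and some $r \in \mathcal{T}$ whose intervals of $r$-marks are disjoint. Under $n = 2$ the pair must be $\{1, 2\}$ and $r$ must be $1/2$, which is exactly the conclusion we want.

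For the ($\Leftarrow$) direction I would invoke \Cref{prop:sufficient} with $r = 1/2 \in \mathcal{T}$: the hypothesis that the intervals of $1/2$-marks of the two agents are disjoint is, for $n = 2$, the same as saying the intervals of $1/2$-marks of \emph{all} agents are pairwise disjoint, and the proposition then produces the desired connected strongly-proportional allocation.

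I do not expect a genuine obstacle; the only substantive remark is the coincidence of the two quantifiers when there is exactly one pair of agents and one value in $\mathcal{T}$. This also clarifies why the clean characterization available at $n = 2$ fails to extend to $n \geq 3$: as \Cref{eg:three_agents,eg:three_agents_positive} demonstrate, once there is more than one pair of agents to consider simultaneously, the information carried by the intervals of $r$-marks for $r \in \mathcal{T}$ alone is no longer enough to determine existence, which is precisely the gap between \Cref{prop:necessary,prop:sufficient} in general.
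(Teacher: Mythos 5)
Your proposal is correct and matches the paper exactly: the corollary is stated there as an immediate consequence of \Cref{prop:necessary,prop:sufficient}, which coincide for $n=2$ since $\mathcal{T}=\{1/2\}$ and there is only one pair of agents. Your write-up simply makes explicit the quantifier collapse that the paper leaves implicit.
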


The two conditions do not coincide for $n \geq 3$ agents, however.
In the search for a necessary and sufficient condition for three or more agents, one could consider weakening the condition in \Cref{prop:sufficient} to require the intervals of $r$-marks of just \emph{two} agents to be pairwise disjoint for some $r \in \mathcal{T}$.
However, as one could see in \Cref{eg:three_agents}, even when the intervals of $r$-marks of Bob and Chana are disjoint for \emph{all} $r \in \mathcal{T}$, there is no connected strongly-proportional allocation.
Another possibility is to require that the interval of $r$-marks of one agent to be disjoint from every other agents' intervals of $r$-marks for some $r \in \mathcal{T}$.
However, the following example shows that this is still not correct.

\begin{example}
\label{eg:three_agents2}
Consider the following instance for $n = 3$ agents.

\begin{center}
\begin{tabular}{|l|ccccc|}
\hline
\textbf{Alice} & 4 & 2 & \multicolumn{1}{c|}{2} & 1 & 3 \\
\hline
\textbf{Bob}   & 4 & 0 & 2 & \multicolumn{1}{c|}{2} & 4 \\
\hline
\textbf{Chana} & 4 & 0 & 2 & \multicolumn{1}{c|}{2} & 4 \\
\hline
\end{tabular}
\end{center}
All agents value the entire cake at $12$, so the $2/3$-marks are at value $8$, denoted by vertical lines.
Alice's $2/3$-mark is disjoint from Bob's and Chana's $2/3$-marks.
However, no connected strongly-proportional allocation exists.
If Alice receives the leftmost piece or the rightmost piece, then the remaining cake is worth at most $8$ to both Bob and Chana, and both of them cannot simultaneously get a piece worth more than $4$ each since they have identical valuations.
If Alice receives the middle piece instead, then Bob and Chana must receive the leftmost and the rightmost piece in some order.
However, the leftmost piece must touch the third region, the rightmost piece must touch the fourth region, and Alice's piece is confined to the third and fourth regions which is only worth at most $3$ to her.
\end{example}

\Cref{eg:three_agents,eg:three_agents_positive,eg:three_agents2} show that the existence of a connected strongly-proportional allocation cannot be determined based on $t/n$-marks alone.
This inspires us to find another condition that characterizes the existence of a connected strongly-proportional allocation.

In \Cref{sec:general_ub}, we generalize the condition from \Cref{thm:hungry_equal_condition} for $n$ non-hungry agents, regardless of whether they have equal entitlements or not.
We show that this condition can be checked by an algorithm using $O(n \cdot 2^n)$ queries.
Now, the result in \Cref{thm:hungry_unequal_lb} says that the lower bound number of queries needed for an algorithm to determine the existence of a connected strongly-proportional allocation for $n$ hungry agents with generic entitlements is $\Omega(n \cdot 2^n)$---we show in \Cref{sec:general_lb} that this lower bound also applies to (not necessarily hungry) agents with \emph{equal entitlements}.

\subsection{Upper Bound}
\label{sec:general_ub}

Our condition requires agents to mark pieces of cake one after another in a certain order.
We explain this operation more precisely.
Let $\sigma : [n] \to [n]$ be a permutation of agents, and let $x \in C$ and $r_1, \ldots, r_n \in [0, 1]$.
The agents proceed in the order $\sigma(1), \ldots, \sigma(n)$.
Agent $\sigma(1)$ starts first and makes a mark at $x_1 = \textsc{Mark}_{\sigma(1)}(x, r_{\sigma(1)})$, the rightmost point such that $[x, x_1]$ is worth $r_{\sigma(1)}$ to her.
Then, agent $\sigma(2)$ continues from $x_1$, and makes a mark at $x_2 = \textsc{Mark}_{\sigma(2)}(x_1, r_{\sigma(2)})$, the rightmost point such that $[x_1, x_2]$ is worth $r_{\sigma(2)}$ to her.
Each agent $\sigma(i)$ repeats the same process of making a mark at $x_i = \textsc{Mark}_{\sigma(i)}(x_{i-1}, r_{\sigma(i)})$ such that $[x_{i-1}, x_i]$ is the largest possible piece worth $r_{\sigma(i)}$ to her.
We shall overload the definition of \textsc{Mark} and define%
\footnote{The subscript of \textsc{Mark} here is a permutation $\sigma$, not an agent number.} 
$\textsc{Mark}_{\sigma}(x, \mathbf{r})$ as the point $x_n$ resulting from this sequential marking process, where $\mathbf{r} = (r_1, \ldots, r_n)$.
If $[x_{i-1}, 1]$ is worth less than $r_{\sigma(i)}$ to agent $\sigma(i)$ at any point, then $\textsc{Mark}_{\sigma}(x, \mathbf{r})$ is defined as $\infty$.
This operation is described in \Cref{alg:mark_sigma}.
Note that each $\textsc{Mark}_{\sigma}(x, \mathbf{r})$ operation requires at most $n$ ($\textsc{Mark}_i$) queries.

\begin{algorithm}[ht]
  \caption{Computing $\textsc{Mark}_{\sigma}(x, \mathbf{r})$ for $n$ agents.}
  \label{alg:mark_sigma}
  \begin{algorithmic}[1]
    \State $x_0 \leftarrow x$
    \For{$i = 1, \ldots, n$}
      \State $x_i \leftarrow \textsc{Mark}_{\sigma(i)}(x_{i-1}, r_{\sigma(i)})$
      \State \algorithmicif \ $x_i = \infty$ \algorithmicthen \ \Return $\infty$
    \EndFor
    \State \Return $x_n$
  \end{algorithmic}
\end{algorithm}

Our necessary and sufficient condition for $n$ (possibly non-hungry) agents requires us to check whether the point $\textsc{Mark}_{\sigma}(0, \mathbf{w})$ is less than 1 for some permutation $\sigma$.
The point $\textsc{Mark}_{\sigma}(0, \mathbf{w})$ is determined when agents go in the order as prescribed by $\sigma$ and make their rightmost marks worth their entitlements to each of them one after another.
The idea behind the proof is that starting from the agent who receives the rightmost piece in $\sigma$ and going leftwards, each agent is able to move the boundaries of her piece such that she receives a small piece of cake with positive value $\epsilon$ from the right and gives away a small piece of cake with value $\epsilon/2$ to the agent on the left, thereby increasing the value of her piece by a positive value $\epsilon/2$.

\begin{theorem}
\label{thm:general_condition}
Let an instance with $n$ agents be given.
Then, a connected strongly-proportional allocation exists if and only if there exists a permutation $\sigma : [n] \to [n]$ such that $\textsc{Mark}_{\sigma}(0, \mathbf{w}) < 1$.
\end{theorem}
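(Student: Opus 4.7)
The plan is to prove both directions by relating the sequential marking process to the cuts of a putative connected strongly-proportional allocation. Writing $x_0 = 0$ and $x_k = \textsc{Mark}_{\sigma(k)}(x_{k-1}, w_{\sigma(k)})$ for $k \in [n]$, so that $\textsc{Mark}_\sigma(0,\mathbf{w}) = x_n$, the central observation is that $x_k$ being the \emph{rightmost} $r$-mark gives the one-sided strict inequality $V_{\sigma(k)}([x_{k-1}, z]) > w_{\sigma(k)}$ for every $z > x_k$ (with $z \leq 1$). This is the engine that converts the weak condition $x_n < 1$ into strict gains for every agent.

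For the necessity direction ($\Rightarrow$), I start from a connected strongly-proportional allocation with cuts $0 = c_0 < c_1 < \cdots < c_n = 1$ and take $\sigma$ to be the permutation that lists the agents left to right, so $V_{\sigma(k)}([c_{k-1}, c_k]) > w_{\sigma(k)}$ for all $k$. I then show $x_k < c_k$ by induction on $k$. For the base case $k = 1$, since $V_{\sigma(1)}([0, c_1]) > w_{\sigma(1)}$, the rightmost mark $x_1$ with value exactly $w_{\sigma(1)}$ must lie strictly to the left of $c_1$ (else the continuous value function $V_{\sigma(1)}([0, \cdot])$ would take the value $w_{\sigma(1)}$ at $c_1$ or beyond). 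For the inductive step, $x_{k-1} \leq c_{k-1}$ gives $V_{\sigma(k)}([x_{k-1}, c_k]) \geq V_{\sigma(k)}([c_{k-1}, c_k]) > w_{\sigma(k)}$, and the same rightmost-mark argument forces $x_k < c_k$. Taking $k = n$ gives $x_n < 1$.

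For the sufficiency direction ($\Leftarrow$), given $x_n < 1$, I build new boundaries $0 = y_0 \leq y_1 \leq \cdots \leq y_n = 1$ satisfying $y_i > x_i$ for every $i \in [n-1]$, and such that $V_{\sigma(k)}([y_{k-1}, y_k]) > w_{\sigma(k)}$ for every $k$. I construct these by backward induction. Set $y_n = 1$; since $x_n < 1$, the rightmost-mark property already gives $V_{\sigma(n)}([x_{n-1}, 1]) > w_{\sigma(n)}$. Going from $k = n$ down to $k = 2$, assume $y_k > x_k$ has been chosen. Then $V_{\sigma(k)}([x_{k-1}, y_k]) > w_{\sigma(k)}$ by the rightmost-mark property, so by continuity of $V_{\sigma(k)}$ in the left endpoint there exists $\delta_k > 0$ with $V_{\sigma(k)}([y_{k-1}, y_k]) > w_{\sigma(k)}$ for every $y_{k-1} \in [x_{k-1}, x_{k-1} + \delta_k)$. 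Choose any $y_{k-1}$ in the nonempty open interval $(x_{k-1}, x_{k-1} + \delta_k) \cap (x_{k-1}, y_k)$; shrinking $\delta_k$ if necessary ensures $y_{k-1} < y_k$ and the ordering is preserved. Finally, for agent $\sigma(1)$ the piece is $[0, y_1]$ with $y_1 > x_1$, and the rightmost-mark property at $x_1$ automatically yields $V_{\sigma(1)}([0, y_1]) > w_{\sigma(1)}$.

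The subtle point I expect to be the main obstacle is the interlocking nature of the shifts in the sufficiency direction: enlarging one agent's piece on the right simultaneously shrinks the neighbor's piece on the left, so one must argue that each boundary can be shifted just enough to secure a strict surplus for one agent without wiping out the freshly-created surplus of the previous one. The backward-induction schedule handles this because every $\delta_k$ is chosen \emph{after} $y_k$ is fixed and depends only on the continuous value function of agent $\sigma(k)$, so the shifts can be made arbitrarily small. A secondary care-point is that the rightmost-mark property, not the mere definition of a mark, is indispensable—without it the strict inequalities $V_{\sigma(k)}([x_{k-1}, z]) > w_{\sigma(k)}$ for $z > x_k$ may fail for non-hungry agents, and neither direction of the equivalence would survive.
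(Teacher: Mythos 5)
Your proposal is correct and follows essentially the same route as the paper: the necessity direction is the same forward induction showing each sequential mark stays strictly left of the corresponding cut, and the sufficiency direction is the same backward induction that places each new boundary $y_{k-1}$ slightly to the right of $x_{k-1}$ so that agent $\sigma(k)$ keeps a strict surplus. The only (cosmetic) difference is that the paper realizes your $\delta_k$ concretely as the rightmost $\epsilon_{k+1}/2$-mark after $x_{k-1}$, which makes the construction directly implementable with queries, whereas your continuity argument is purely existential.
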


\begin{proof}
$(\Rightarrow)$ Suppose that a connected strongly-proportional allocation exists.
Let $\sigma : [n] \to [n]$ be the permutation such that agent $\sigma(k)$ receives the $k$-th piece from the left in this allocation, and let $y_0, y_1, \ldots, y_n$ be the points such that agent $\sigma(k)$ receives the piece $[y_{k-1}, y_k]$ with $y_0 = 0$ and $y_n = 1$.
We shall show that $\textsc{Mark}_{\sigma}(0, \mathbf{w}) < 1$.
Let $x_0, x_1, \ldots, x_n$ be the points as described by \Cref{alg:mark_sigma} for $\textsc{Mark}_{\sigma}(0, \mathbf{w})$.
We shall show by induction that $x_k < y_k$ for all $k \in [n]$; then, $\textsc{Mark}_{\sigma}(0, \mathbf{w}) = x_n < y_n = 1$ gives the desired conclusion.

For the base case $k = 1$, we have $x_1 = \textsc{Mark}_{\sigma(1)}(0, w_{\sigma(1)})$, so $x_1$ is a point for which $V_{\sigma(1)}([0,x_1]) = w_{\sigma(1)}$.
Since agent $\sigma(1)$ receives a piece $[y_0, y_1] = [0, y_1]$ worth more than $w_{\sigma(1)}$, we must have $x_1 < y_1$.
For the inductive case, assume that $x_k < y_k$ for some $k \in [n-1]$, and consider $k+1$.
We have $x_{k+1} = \textsc{Mark}_{\sigma(k+1)}(x_k, w_{\sigma(k+1)}) \leq \textsc{Mark}_{\sigma(k+1)}(y_k, w_{\sigma(k+1)})$ since $x_k < y_k$.
Since agent $\sigma(k+1)$ receives a piece $[y_k, y_{k+1}]$ worth more than $w_{\sigma(k+1)}$, we have $\textsc{Mark}_{\sigma(k+1)}(y_k, w_{\sigma(k+1)}) < y_{k+1}$.
Therefore, the result $x_{k+1} < y_{k+1}$ holds, proving the induction statement.

$(\Leftarrow)$ Suppose that there exists a permutation $\sigma : [n] \to [n]$ such that $\textsc{Mark}_{\sigma}(0, \mathbf{w}) < 1$.
Let $x_0, x_1, \ldots, x_n$ be the points as described by \Cref{alg:mark_sigma} for $\textsc{Mark}_{\sigma}(0, \mathbf{w})$.
Since $x_k$, which is $\textsc{Mark}_{\sigma(k)}(x_{k-1}, w_{\sigma(k)})$, is the rightmost point $z$ such that $[x_{k-1}, z]$ is worth $w_{\sigma(k)}$ to agent $\sigma(k)$, the piece $[x_{k-1}, y_k]$ is worth more than $w_{\sigma(k)}$ to agent $\sigma(k)$ whenever $y_k > x_k$.

We shall define the points $y_1, \ldots, y_n \in C$ in the reverse order such that $y_k > x_k$ for all $k \in [n]$.
Define $y_n = 1 > \textsc{Mark}_{\sigma}(0, \mathbf{w}) = x_n$.
Next, for each $k \in [n-1]$, assume that $y_{k+1}$ is defined such that $y_{k+1} > x_{k+1}$.
Since $[x_k, y_{k+1}]$ is worth more than $w_{\sigma(k+1)}$ to agent $\sigma(k+1)$, it must be worth $w_{\sigma(k+1)} + \epsilon_{k+1}$ to agent $\sigma(k+1)$ for some $\epsilon_{k+1} > 0$.
Define $y_k = \textsc{Mark}_{\sigma(k+1)}(x_k, \epsilon_{k+1}/2)$.
Then, we have $y_k > x_k$.
This completes the definition of $y_1, \ldots, y_n$.

Let $y_0 = x_0 = 0$.
We shall show that the allocation with the cut points at $y_0, \ldots, y_n$ such that $[y_{k-1}, y_k]$ is allocated to agent $\sigma(k)$ for $k \in [n]$ is strongly-proportional.
Agent $\sigma(1)$ receives $[y_0, y_1] = [x_0, y_1]$ which is worth more than $w_{\sigma(1)}$ to her.
For $k \in \{2, \ldots, n\}$, since $[x_{k-1}, y_k]$ is worth $w_{\sigma(k)} + \epsilon_k$ and $[x_{k-1}, y_{k-1}]$ is worth $\epsilon_k/2$ to agent $\sigma(k)$, the piece $[y_{k-1}, y_k]$ is worth $(w_{\sigma(k)} + \epsilon_k) - \epsilon_k/2 > w_{\sigma(k)}$ to agent $\sigma(k)$.
This completes the proof.
\end{proof}

The condition in \Cref{thm:general_condition} reduces to the condition in \Cref{thm:hungry_equal_condition} for hungry agents with equal entitlements, i.e., when $\mathbf{w} = (1/n, \ldots, 1/n)$.
In particular, when every agent has the same $r$-mark for each $r \in \mathcal{T}$, then each of the $n$ marks made in the $\textsc{Mark}_{\sigma}(0, \mathbf{w})$ operation coincides at some $x_i \in \mathcal{T} \cup \{1\}$ for every permutation, and so $\textsc{Mark}_{\sigma}(0, \mathbf{w}) = 1$ for all $\sigma$.
This corresponds to the case where no connected strongly-proportional allocation exists.

The analysis in \Cref{thm:general_condition} relies crucially on the fact that the $\textsc{Mark}_i$ queries return the \emph{rightmost} points.
If the \emph{leftmost} points are returned instead, then the condition does not work---this can be seen from \Cref{eg:three_agents} where Chana, Alice, and Bob could (left-)mark their respective $1/n$ piece of the cake one after another in this order and still have a positive-valued cake left, but no connected strongly-proportional allocation exists as we demonstrated in \Cref{eg:three_agents}.

\begin{center}
\begin{tabular}{|l|cc|ccc|cccc|cc|}
\hline
\textbf{Alice} &   &   & 0 & 0 & 9 &   &   &   &   & \textcolor{red}{0} & \textcolor{red}{9}  \\
\hline
\textbf{Bob}   &   &   &   &   &   & 5 & 1 & 1 & 2 & \textcolor{red}{4} & \textcolor{red}{1} \\
\hline
\textbf{Chana} & 1 & 8 &   &   &   &   &   &   &   & \textcolor{red}{4} & \textcolor{red}{1} \\
\hline
\end{tabular}
\end{center}

We can determine whether the condition in \Cref{thm:general_condition} holds by checking all permutation $\sigma$ to see whether the point $\textsc{Mark}_{\sigma}(0, \mathbf{w})$ is less than $1$ for some $\sigma$.
Since there are $n!$ possible permutations of $[n]$ and each $\textsc{Mark}_{\sigma}$ operation requires at most $n$ queries, the total number of queries required in the algorithm is at most $n \cdot n!$.

However, we can reduce the number of queries to $n \cdot 2^{n-1}$ by dynamic programming.
Our approach is similar to the method used in \citet{aumann2012computing}---in their work, they iteratively find a value $k$ such that there exists a connected allocation where every agent receives \emph{at least} $k$, while here we require every agent $i$ to receive a connected piece with value \emph{strictly more} than $w_i$.

We now describe our algorithm.
For every subset $N \subseteq [n]$, our algorithm caches the \emph{best} mark $b_N$ obtained by the subset of agents.
The best mark $b_N$ is the leftmost point possible over all permutations of the agents in $N$ when the agents go in the order as prescribed by the permutation and make their rightmost marks worth their entitlements to each of them one after another.
The algorithm aims to compute this point for every $N$.

The best mark for the empty set of agents is initialized as $b_{\emptyset} = 0$.
Thereafter, for every $k \in [n]$, we assume that the best mark for every subset of $k-1$ agents is calculated earlier and cached.
We now need to find $b_N$ for every subset $N \subseteq [n]$ with $k$ agents.
The last agent to make the best mark for $N$ could be any of the agents $i \in N$.
Therefore, for each $i \in N$, we retrieve the best mark for $N \setminus \{i\}$, which is $b_{N \setminus \{i\}}$ and has been cached earlier, and let agent $i$ make the rightmost mark such that the cake starting from $b_{N \setminus \{i\}}$ is worth $w_i$ to her.
By iterating through all $i \in N$, we find the leftmost such point and cache this point as $b_N$.
When $k = n$, we obtain $b_{[n]}$, which is the best $\textsc{Mark}_{\sigma}(0, \mathbf{w})$ over all permutations $\sigma$.
Therefore, the algorithm returns ``true'' if $b_{[n]} < 1$, and ``false'' otherwise.
This implementation reduces the number of queries by a factor of $2^{\omega(n)}$.

This algorithm is described in \Cref{alg:general_improved}.
The correctness of the algorithm relies on the statement in \Cref{thm:general_condition} and the fact that $b_{[n]}$ in the algorithm is less than $1$ if and only if there exists a permutation $\sigma : [n] \to [n]$ such that $\textsc{Mark}_{\sigma}(0, \mathbf{w}) < 1$.

\begin{algorithm}[ht]
  \caption{Determining the existence of a connected strongly-proportional allocation for $n$ agents with fewer queries.}
  \label{alg:general_improved}
  \begin{algorithmic}[1]
    \State $b_{\emptyset} \leftarrow 0$
    \For{$k = 1, \ldots, n$}
      \For{each subset $N \subseteq [n]$ with $|N| = k$}
        \State $b_N \leftarrow \infty$
        \For{each agent $i \in N$}
          \State $y \leftarrow \textsc{Mark}_i(b_{N \setminus \{i\}}, w_i)$ \label{ln:wi}
          \State \algorithmicif \ $y < b_N$ \algorithmicthen \ $b_N \leftarrow y$ \algorithmiccomment{this finds the ``best'' $b_N$}
        \EndFor
      \EndFor
    \EndFor
    \State \algorithmicif \ $b_{[n]} < 1$ \algorithmicthen \ \Return true \algorithmicelse \ \Return false
  \end{algorithmic}
\end{algorithm}

\begin{theorem}
\label{thm:general_improved_ub}
\Cref{alg:general_improved} decides whether a connected strongly-proportional allocation exists for $n$ agents using at most $n \cdot 2^{n-1}$ queries.
\end{theorem}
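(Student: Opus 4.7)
The plan is to establish two things: correctness (the cached $b_{[n]}$ equals $\min_{\sigma}\textsc{Mark}_{\sigma}(0, \mathbf{w})$ over all permutations $\sigma$ of $[n]$) and a query count of $n \cdot 2^{n-1}$ via a standard identity. Combining correctness with \Cref{thm:general_condition} then yields the theorem.

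First I would prove a monotonicity lemma: for each agent $i$ and each $r \in [0,1]$, if $x \leq x'$ then $\textsc{Mark}_i(x, r) \leq \textsc{Mark}_i(x', r)$ (treating $\infty$ as larger than any real). Using $F_i(y) := V_i([0,y])$, the point $\textsc{Mark}_i(x, r)$ is the rightmost $y$ with $F_i(y) = F_i(x) + r$ when such a $y$ exists. Since $F_i$ is non-decreasing and continuous and $F_i(x) + r \leq F_i(x') + r$, the rightmost preimage of the smaller level is at most the rightmost preimage of the larger one (otherwise $F_i$ would contradict monotonicity at the rightmost preimage); the case where the larger level exceeds $1$ is absorbed into the $\infty$ convention.

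Second, I would prove by induction on $k = |N|$ the invariant that after the outer loop processes subsets of size $k$, each cached value satisfies $b_N = \min_{\sigma} \textsc{Mark}_{\sigma}(0, \mathbf{w})$, where the minimum ranges over permutations $\sigma$ of the agents in $N$ (so $\textsc{Mark}_{\sigma}$ applies only those $|N|$ marks). The base $b_{\emptyset} = 0$ is immediate. For the inductive step, any permutation $\sigma$ of $N$ has a last agent $i \in N$ and restricts to a permutation $\sigma'$ of $N \setminus \{i\}$, with
\[
\textsc{Mark}_{\sigma}(0, \mathbf{w}) \;=\; \textsc{Mark}_i\bigl(\textsc{Mark}_{\sigma'}(0, \mathbf{w}),\, w_i\bigr).
\]
By the monotonicity lemma, this quantity is minimized over $\sigma'$ when $\textsc{Mark}_{\sigma'}(0, \mathbf{w})$ is minimized, which by the inductive hypothesis equals $b_{N \setminus \{i\}}$. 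Taking a further minimum over the choice of last agent $i \in N$ gives exactly the update rule in lines 4--7 of \Cref{alg:general_improved}, so $b_N = \min_{\sigma}\textsc{Mark}_{\sigma}(0, \mathbf{w})$. Applying this to $N = [n]$ and invoking \Cref{thm:general_condition} shows the algorithm returns \emph{true} iff a connected strongly-proportional allocation exists.

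Third, for the query count, the only query made is the $\textsc{Mark}_i$ call on line 6, executed exactly once per pair $(N, i)$ with $i \in N \subseteq [n]$. The total number of such pairs is
\[
\sum_{N \subseteq [n]} |N| \;=\; \sum_{k=0}^{n} k \binom{n}{k} \;=\; n \cdot 2^{n-1}.
\]
The main obstacle I anticipate is the monotonicity lemma: because valuation functions may be only non-decreasing (not strictly increasing), $\textsc{Mark}$ can jump across flat regions, and one must verify that taking \emph{rightmost} preimages preserves monotonicity across such plateaus. Once this lemma is cleanly established, the DP correctness and the counting argument are routine.
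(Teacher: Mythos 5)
Your proposal is correct and follows essentially the same route as the paper: establish that the dynamic program computes $\min_{\sigma}\textsc{Mark}_{\sigma}(0,\mathbf{w})$ and count queries via $\sum_{k} k\binom{n}{k} = n\cdot 2^{n-1}$. Your version is in fact slightly more careful than the paper's, since you isolate and prove the monotonicity of $\textsc{Mark}_i(\cdot, r)$ in its starting point, which the paper's induction for the ``$\Leftarrow$'' direction uses implicitly when it asserts that $b_N$ is the minimum over all permutations of $N$.
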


\begin{proof}
To show that \Cref{alg:general_improved} is correct, it suffices to show that $b_{[n]}$ in the algorithm is less than $1$ if and only if there exists a permutation $\sigma : [n] \to [n]$ such that $\textsc{Mark}_{\sigma}(0, \mathbf{w}) < 1$, by \Cref{thm:general_condition}.

$(\Rightarrow)$ If $b_{[n]}$ in the algorithm is less than $1$, then $b_{[n]}$ is contributed by some agent $i_n \in [n]$ making the rightmost $w_{i_n}$-mark after $b_{[n] \setminus \{i_n\}}$.
Let $\sigma(n) = i_n$.
We then consider the agent $i_{n-1}$ contributing the rightmost $w_{i_{n-1}}$-mark for $b_{[n] \setminus \{i_n\}}$, and so on.
Repeat the procedure $n-1$ times to obtain the identities of the agents $\sigma(n-1), \ldots, \sigma(1)$.
Then, $\sigma$ is the desired permutation.

$(\Leftarrow)$ Suppose there exists a permutation $\sigma : [n] \to [n]$ such that $\textsc{Mark}_{\sigma}(0, \mathbf{w}) < 1$.
For each $k \in [n]$, let $N_k = \{\sigma(1), \ldots, \sigma(k)\}$, and let $x_k^\sigma$ be the mark where agents $\sigma(1), \ldots, \sigma(k)$ make their rightmost mark worth their entitlements to each of them one after another in this order.
We prove by induction on $k$ that $b_{N_k} \leq x_k^\sigma$.
The base case of $k = 1$ is clear, as the two quantities are equal.
Assume that the inequality is true for $k \in [n-1]$; we shall prove the result for $k+1$.
The point $b_{N_{k+1}}$ is the smallest point over all permutations where agents $\sigma(1), \ldots, \sigma(k+1)$ make their rightmost $1/n$-mark one after another in some order.
In particular, $x_{k+1}^\sigma$ is one of these points under consideration.
Therefore, we must have $b_{N_{k+1}} \leq x_{k+1}^\sigma$, proving the induction statement.
Then, we have $b_{[n]} = b_{N_n} \leq x_n^\sigma = \textsc{Mark}_{\sigma}(0, \mathbf{w}) < 1$.

Next, we show that the number of queries made by \Cref{alg:general_improved} is at most $n \cdot 2^{n-1}$.
Let $k \in [n]$ be given.
There are $\binom{n}{k}$ subsets $N$ with cardinality $k$, and for each $N$, each of the $|N| = k$ agents makes a mark query.
This means that $k \binom{n}{k}$ queries are made.
Hence, the total number of queries is $\sum_{k=1}^n k \binom{n}{k} = n \cdot 2^{n-1}$ by a combinatorial identity.
\end{proof}

\subsection{Lower Bound}
\label{sec:general_lb}

\Cref{thm:hungry_unequal_lb} provides a lower bound for hungry agents with unequal entitlements; we shall now prove a similar lower bound for general agents with equal entitlements.

At a high level, the technique used is similar to that in the proofs of \Cref{thm:hungry_equal_lb,thm:hungry_unequal_lb}: we use an adversarial argument where we construct an instance with agents having uniform valuations on the cake such that no strongly-proportional allocation exists, but tweak the valuations slightly depending on the queries made.
However, the details from the proof of \Cref{thm:hungry_equal_lb} cannot be used directly since the existence of a connected strongly-proportional allocation is not solely dependent on the $r$-marks for $r \in \mathcal{T}$ for non-hungry agents (see the discussion at the beginning of \Cref{sec:general}), and the details from the proof of \Cref{thm:hungry_unequal_lb} cannot be used directly since \Cref{thm:hungry_unequal_lb} requires the entitlements to be generic.

Instead, we construct the following instance with $n \geq 3$ agents.
The cake is divided into $2n-1$ parts.
The odd parts (i.e., the 1st, 3rd, \ldots, $(2n-1)$-th parts) are non-valuable to agents $1$ to $n-1$, and worth $1/n$ each to agent $n$.
The even parts (i.e., the 2nd, 4th, \ldots, $(2n-2)$-th parts) are valuable to agents $1$ to $n-1$, and non-valuable to agent $n$.
For $i \in [n-1]$, agent $i$'s first $n-2$ valuable parts (i.e., the 2nd, 4th, \ldots, $(2n-4)$-th parts) are worth $a_i/(n-2)$ each to agent $i$ for some carefully selected $a_i$, and the last valuable part (i.e., the $(2n-2)$-th part) is worth $1-a_i$ to agent $i$.
See \Cref{fig:general_equal_lb_cake} for an illustration.

\begin{figure}[ht]
\centering
\resizebox{\textwidth}{!}{
\begin{tabular}{|c|cc;{2pt/2pt}c;{2pt/2pt}cc;{2pt/2pt}ccc|}
\hline
\textbf{Agent $1$} & $0$ & $a_1/(n-2)$ & \multirow{5}{*}{\centered{$\cdots$ \\ (total: $n-2$ \\ identical copies)}} & $0$ & $a_1/(n-2)$ & $0$ & $1-a_1$ & $0$ \\
$\vdots$ & $\vdots$ & $\vdots$ &  & $\vdots$ & $\vdots$ & $\vdots$ & $\vdots$ & $\vdots$ \\
\textbf{Agent $n-1$} & $0$ & $a_{n-1}/(n-2)$ &  & $0$ & $a_{n-1}/(n-2)$ & $0$ & $1-a_{n-1}$ & $0$ \\
\textbf{Agent $n$} & $1/n$ & $0$ &  & $1/n$ & $0$ & $1/n$ & $0$ & $1/n$ \\
\hline
\end{tabular}
}
\caption{Construction of the cake used in the proof of \Cref{thm:general_equal_lb}.}
\label{fig:general_equal_lb_cake}
\end{figure}

Consider a connected strongly-proportional allocation with equal entitlements.
Agent $n$'s piece has to include pieces from at least two consecutive odd parts in order for her value to be greater than $1/n$.
By a clever choice of $a_i$ for $i \in [n-1]$, we force these two odd parts to be the \emph{rightmost} odd parts.
This leaves the remaining $2n-4$ parts for agents $1$ to $n-1$.
Removing all the non-valuable parts for these agents, the remaining valuable parts of the cake are worth $a_i$ to agent $i \in [n-1]$.
Divide all valuations and entitlements by $a_i$ for each $i \in [n-1]$.
Then, this is equivalent to a cake with value $1$ to every agent such that each agent's entitlement is $w'_i = 1/na_i$.
If we select the $a_i$'s carefully such that $\sum_{i \in [n-1]} w'_i = 1$ and the entitlements $w'_i$'s are \emph{generic}, then we can invoke \Cref{thm:hungry_unequal_lb} to show that the lower bound number of queries is in $\Omega(n \cdot 2^n)$.

\begin{theorem}
\label{thm:general_equal_lb}
Any algorithm that decides whether a connected strongly-proportional allocation exists for $n$ agents with equal entitlements requires $\Omega(n \cdot 2^n)$ queries.
\end{theorem}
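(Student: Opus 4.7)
The plan is to realize the reduction sketched just before the theorem: build a family of $n$-agent equal-entitlement instances in which deciding the existence of a connected strongly-proportional allocation is, up to a constant factor in the number of queries, equivalent to deciding it for an $(n-1)$-agent instance of hungry agents with generic entitlements, and then invoke \Cref{thm:hungry_unequal_lb}. Concretely, I would fix the cake structure of \Cref{fig:general_equal_lb_cake} and choose $a_1,\ldots,a_{n-1}\in(0,1)$ close to $1$ so that the vector $(w'_1,\ldots,w'_{n-1})$ defined by $w'_i := 1/(n a_i)$ is a valid generic entitlement vector (i.e.\ positive, summing to $1$, with $w'_N \neq w'_{N'}$ for all distinct $N,N'\subseteq[n-1]$). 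Making this choice is straightforward because the map $(a_1,\ldots,a_{n-1})\mapsto (w'_1,\ldots,w'_{n-1})$ has a codimension-$1$ linear constraint and genericity is an open dense condition.

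Next I would argue the structural lemma that drives the reduction: in any connected strongly-proportional allocation of such an instance, agent $n$'s piece must be contained in the union of the two rightmost odd parts and the even part between them. Since each odd part is worth exactly $1/n$ to agent $n$ and the even parts are worthless to her, she needs her piece to straddle at least two odd parts. If it straddled any odd pair other than the two rightmost, the remaining cake would split into a left block and a right block whose union of even parts has total agent-$i$ value at most $a_i + (a_i/(n-2)) \cdot (\text{few})$; by taking $a_i$ close enough to $1$ the ``short'' block on one side carries vanishing value for every agent $i\in[n-1]$, making it impossible to give all of $1,\ldots,n-1$ more than $1/n$ from that side. Hence agent $n$ is forced to the right, leaving the first $2n-3$ parts for agents $1,\ldots,n-1$, on which only the first $n-2$ even parts carry value and each such agent has total value exactly $a_i$.

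Rescaling each agent $i\in[n-1]$'s valuation by $1/a_i$ on the left sub-cake turns the sub-problem into a standard $(n-1)$-agent hungry instance with entitlements $w'_i = 1/(na_i)$, and a connected strongly-proportional allocation of the original cake exists if and only if one exists for this reduced hungry-unequal instance. By genericity and \Cref{thm:hungry_unequal_lb}, deciding the latter requires at least $(n-1)\cdot 2^{n-3}$ queries, which is $\Omega(n\cdot 2^n)$.

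The main obstacle will be converting this structural equivalence into a clean query-complexity reduction, since an algorithm for the original problem is allowed to place eval/mark queries that span odd and even parts simultaneously and that involve agent $n$. I would handle this the same way the proof of \Cref{thm:hungry_unequal_lb} handles Robertson--Webb queries: replace each query by the (at most two) $r$-mark questions it encodes, and observe that for agents $1,\ldots,n-1$ the valuation is piecewise linear with all ``kinks'' at the boundaries of the fixed parts, so a query that crosses the boundary between a valuable and a non-valuable region decomposes into a query on the reduced sub-cake together with bookkeeping that is determined by the known part-structure. Queries addressed to agent $n$ can be answered in advance from the fixed construction and do not help at all. An adversary then mimics the one in \Cref{thm:hungry_unequal_lb}, starting from a symmetric instance (all $a_i$ equal) for which no strongly-proportional allocation exists; whenever too few queries have been made, it perturbs some $a_i$ inside an interval that no query has touched, creating generic entitlements in the reduced problem and, via \Cref{lem:hungry_one_agent}, a connected strongly-proportional allocation in the original cake—contradicting correctness and forcing the desired $\Omega(n\cdot 2^n)$ bound.
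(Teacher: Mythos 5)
Your high-level plan---force agent $n$ to swallow one valuable part, rescale the left sub-cake, and import the lower bound of \Cref{thm:hungry_unequal_lb} for $n-1$ hungry agents with generic entitlements---is the same reduction the paper uses, and your treatment of queries (decomposing each Robertson--Webb query at the known part boundaries, answering agent $n$'s queries for free) is fine. The genuine gap is in your closing adversary argument, and it contradicts your own first paragraph. You start the adversary ``from a symmetric instance (all $a_i$ equal)'' and only later perturb to ``create generic entitlements in the reduced problem.'' But the exponential bound in \Cref{thm:hungry_unequal_lb} does not come from genericity appearing at the end: it comes from the generic entitlement vector being \emph{fixed in advance}, so that the $2^{n-2}-1$ subset-sums $w'_N$ with $i \notin N$ define that many pairwise disjoint candidate intervals $I_k$ for agent $i$'s hidden mark, of which each query eliminates at most two. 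If all $a_i$ are equal, the reduced entitlements are all $1/(n-1)$, there are only $n-2$ distinct nonzero subset-sums, and the same counting gives only $O(n^2)$---precisely the gap between \Cref{thm:hungry_equal_lb} and \Cref{thm:hungry_unequal_lb}. Perturbing the $a_i$ themselves does not help either, since the part values $a_i/(n-2)$ and $1-a_i$ are recoverable with $O(n^2)$ eval queries, so there is no exponential family of indistinguishable alternatives. The repair is to hard-wire generic $a_i$ into the instance from the outset (the paper takes $w'_i = \frac{M+2^{i-1}}{(n-1)M+2^{n-1}-1}$, $a_i = 1/(nw'_i)$, $M \geq 2^n n^2$), let the ``no'' instance be the one uniform \emph{within} each part, and have the adversary perturb one agent's valuation \emph{inside the left sub-cake} so that her $w'_{N_k}$-mark, in rescaled coordinates, lands in an untouched $I_k$.

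A secondary problem is your structural lemma that in \emph{every} connected strongly-proportional allocation agent $n$'s piece must occupy the two rightmost odd parts. It is neither needed nor correctly justified: the sum constraint forces $a_i \approx (n-1)/n$, so the last even part is worth $1-a_i \approx 1/n$ to agent $i$, which is not ``vanishing,'' and a block-splitting argument does not obviously go through. All the lower bound requires is non-existence in the single uniform-within-parts instance, and the clean way to get it is the inequality $a_i/(n-2) > 1-a_i$: wherever agent $n$'s piece sits, it contains at least one even part entirely, every surviving valuable part is worth at most $a_i/(n-2)$ to agent $i$, hence agent $i$ needs strictly more than $(1/n)\div(a_i/(n-2)) = (n-2)w'_i$ parts' worth of valuable cake, and summing over $i \in [n-1]$ demands more than $n-2$ parts when at most $n-2$ remain. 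The ``yes'' instances then place agent $n$ on the right by construction; no claim about all allocations of all instances is required.
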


\begin{proof}
Let $M$ be a sufficiently large constant (particularly, $M \geq 2^n n^2$), and for each $i \in [n-1]$, define $w'_i = \frac{M+2^{i-1}}{(n-1)M+2^{n-1}-1}$ and $a_i = \frac{1}{nw'_i}$.
Note that $\sum_{i \in [n-1]} w'_i = 1$.

Consider a cake with $2n-1$ parts as illustrated in \Cref{fig:general_equal_lb_cake}.
We now show that the valuations on the cake are valid.
It suffices to show that for each $i \in [n-1]$, each of $a_i/(n-2)$ and $1-a_i$ is positive.
It is clear that $w'_i$ is positive, which means that $a_i$ and hence $a_i/(n-2)$ are positive.
We have $1-a_i = \frac{M-2^{n-1}+2^{i-1}n+1}{n(M+2^{i-1})}$ which is positive since $M > 2^{n-1}$.
These show that the valuations are valid.

We show that any algorithm that makes fewer than $(n-1)(2^{n-3}-n+3) \in \Omega(n \cdot 2^n)$ queries may not be able to decide whether a connected strongly-proportional allocation exists.
The proof idea is similar to that in the proofs of \Cref{thm:hungry_equal_lb,thm:hungry_unequal_lb}.
We assume that the answer to every query made by the algorithm is consistent with the instance where the valuation of each agent is uniformly distributed in their valuable parts, which are the even parts for agents $i \in [n-1]$ and the odd parts for agent $n$.
We show that regardless of what the algorithm outputs as its answer, there are instances which contradict the answer.

\textbf{Case 1: The algorithm outputs ``true''. }
Consider the instance where the valuation of each agent is uniformly distributed in their valuable parts.
We show that a connected strongly-proportional allocation cannot exist in this instance.

Suppose on the contrary that a connected strongly-proportional allocation exists.
Recall that agents have equal entitlements, which means that every agent receives a piece worth more than $1/n$.
Agent $n$'s piece has to include pieces from at least two consecutive odd parts in order for her value to be greater than $1/n$, which means that agent $n$'s piece has to contain at least one of the valuable parts of agent $i \in [n-1]$ completely.

We now show that for each $i \in [n-1]$, we have $a_i/(n-2) > 1-a_i$.
We have
\begin{align*}
    \frac{a_i}{n-2}-(1-a_i) = \frac{M-2^{i-1}n^2+n(2^{n-1}+2^i-1)+1}{(n-2)n(M+2^{i-1})} > 0,
\end{align*}
where the inequality holds because $M > 2^{i-1}n^2$.
This shows that  $a_i/(n-2) > 1-a_i$, which implies that each of the left valuable parts is worth more than the rightmost valuable part for agent $i \in [n-1]$.

For $i \in [n-1]$, since $a_i/(n-2) > 1-a_i$, no matter which valuable part(s) of agent $i$ is given to agent $n$, each of the remaining $\leq n-2$ valuable parts is worth at most $a_i/(n-2)$ to agent $i$.
Since the valuations are uniformly distributed within each valuable part, agent $i$ receives more than $(1/n) \div (a_i/(n-2)) = (n-2)w'_i$ of a valuable part.
Therefore, agents $1$ to $n-1$ receive more than 
\begin{align*}
    \sum_{i=1}^{n-1} (n-2)w'_i = (n-2) \sum_{i=1}^{n-1} w'_i = n-2
\end{align*}
valuable parts in total.
This is not possible, and hence, no connected strongly-proportional allocation exists.

\textbf{Case 2: The algorithm outputs ``false''. }
We shall now show by construction that the information provided to the algorithm is also consistent with an instance with a connected strongly-proportional allocation, resulting in a contradiction.

Let agent $n$ receive the rightmost two consecutive odd parts, so that agent $n$ receives more than $1/n$.
This leaves the remaining $2n-4$ parts for agents $1$ to $n-1$.
Removing all the non-valuable parts for all agents $i \in [n-1]$, the remaining valuable parts of the cake are worth $a_i$ to agent $i$.
Divide all valuations and entitlements by $a_i$ for each $i \in [n-1]$---note that this does not change the existence of a connected strongly-proportional allocation.
Then, this is equivalent to a cake with value $1$ to every agent in $[n-1]$ such that agent $i$'s entitlement is $1/na_i = w'_i$. 
Note that $\sum_{i \in [n-1]} w'_i = 1$, so we have reduced the problem to finding a connected strongly-proportional allocation on a modified instance with $n-1$ hungry agents such that agent $i \in [n-1]$ has an entitlement of $w'_i$.

We claim that the entitlements are generic.
To see this, let $N, N' \subseteq [n-1]$ such that $\sum_{i \in N} w'_i = \sum_{i \in N'} w'_i$.
Since the denominators of the $w'_i$'s are equal to each other, we have $\sum_{i \in N} (M+2^{i-1}) = \sum_{i \in N'} (M+2^{i-1})$.
Since $M$ is larger than $\sum_{i \in [n-1]} 2^{i-1}$, we must have $|N| = |N'|$, which implies that $\sum_{i \in N} 2^{i-1} = \sum_{i \in N'} 2^{i-1}$.
The only way this is possible is when $N = N'$, which proves that the entitlements are generic.

Since the $n-1$ agents are hungry in this modified instance, we can use the construction in the proof of \Cref{thm:hungry_unequal_lb} for agents $1, \ldots, n-1$, which shows that, with fewer than $(n-1)2^{n-3}$ queries, the answers are consistent with the existence of a connected strongly-proportional allocation.
However, note that the marks of agent $i \in [n-1]$ between every $a_i/(n-2)$ part in \Cref{fig:general_equal_lb_cake} are already known, which translate to the $t/(n-2)$-marks for $t \in \{1, \ldots, n-3\}$.
This means that a total of $(n-1)(n-3)$ marks are known, which requires at most the same number of queries.
Therefore, with fewer than $(n-1)2^{n-3} - (n-1)(n-3) \in \Omega(n \cdot 2^n)$ queries, there exists an instance consistent with the information provided by the queries that admits a connected strongly-proportional allocation.
This contradicts the output of the algorithm.
\end{proof}

The upper bound from \Cref{thm:general_improved_ub} and the lower bound from \Cref{thm:general_equal_lb} imply that the number of queries required to determine the existence of a connected strongly-proportional allocation is in $\Theta(n \cdot 2^n)$, even for agents with equal entitlements.
The same tight bound also holds for \emph{computing} such an allocation if it exists---this can be shown by modifying \Cref{alg:general_improved} slightly by following the details in the second half of the proof of \Cref{thm:general_condition}.

\begin{theorem}
\label{thm:general_unequal_tight}
The number of queries required to decide the existence of a connected strongly-proportional allocation for $n$ agents, or to compute such an allocation if it exists, is in $\Theta(n \cdot 2^n)$, even for agents with equal entitlements.
\end{theorem}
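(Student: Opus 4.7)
The plan is to assemble this as a short corollary that combines previously established bounds with a minor computational addendum. The decision part is immediate: the lower bound $\Omega(n \cdot 2^n)$ from \Cref{thm:general_equal_lb} (which already handles the harder subcase of equal entitlements) together with the upper bound $n \cdot 2^{n-1}$ from \Cref{thm:general_improved_ub} pins the query complexity of the decision problem at $\Theta(n \cdot 2^n)$, regardless of whether entitlements are equal or unequal.

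For the computational part, I would augment \Cref{alg:general_improved} so that along with each $b_N$ it stores the \emph{witness agent} $i^*(N) \in N$ that achieved the minimum on \Cref{ln:wi}, i.e., the agent $i$ for whom $\textsc{Mark}_i(b_{N \setminus \{i\}}, w_i) = b_N$. This bookkeeping costs nothing extra in queries. If the algorithm returns true, I would then extract a permutation $\sigma$ by backtracking: set $\sigma(n) = i^*([n])$, then $\sigma(n-1) = i^*([n] \setminus \{\sigma(n)\})$, and so on down to $\sigma(1)$. By induction (mirroring the $(\Rightarrow)$ direction of the proof of \Cref{thm:general_improved_ub}), the marks $x_0, \ldots, x_n$ produced by $\textsc{Mark}_\sigma(0, \mathbf{w})$ along this permutation satisfy $x_k = b_{\{\sigma(1),\ldots,\sigma(k)\}}$, and in particular $x_n = b_{[n]} < 1$; these $x_k$ values are already cached and need not be recomputed.

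Next I would convert $(x_0, \ldots, x_n)$ into actual cut points $(y_0, \ldots, y_n)$ using the construction in the second half of the proof of \Cref{thm:general_condition}. Concretely, set $y_0 = 0$ and $y_n = 1$; then, proceeding from $k = n-1$ down to $k = 1$, query $\epsilon_{k+1} = \textsc{Eval}_{\sigma(k+1)}(x_k, y_{k+1}) - w_{\sigma(k+1)}$ (which is strictly positive since $y_{k+1} > x_{k+1}$), and define $y_k = \textsc{Mark}_{\sigma(k+1)}(x_k, \epsilon_{k+1}/2)$. The proof of \Cref{thm:general_condition} already shows that the resulting allocation $([y_{k-1}, y_k])_{k \in [n]}$ assigned via $\sigma$ is strongly-proportional. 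This post-processing uses $O(n)$ additional queries, which is absorbed into the $O(n \cdot 2^n)$ bound.

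The only step that requires any real thought is verifying that the backtracking recovers a permutation $\sigma$ for which $\textsc{Mark}_\sigma(0, \mathbf{w}) = b_{[n]}$; everything else is bookkeeping. However, this verification is essentially the $(\Rightarrow)$ direction of the correctness argument in \Cref{thm:general_improved_ub} read in reverse, so no new ideas are needed. Combining the matching $\Theta(n \cdot 2^n)$ bounds for the decision problem with this query-preserving reduction from decision to search yields the stated tight bound for both tasks.
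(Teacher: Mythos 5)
Your proposal is correct and follows essentially the same route as the paper: the decision bound is obtained by combining the lower bound of \Cref{thm:general_equal_lb} with the upper bound of \Cref{thm:general_improved_ub}, and the computation is handled by modifying \Cref{alg:general_improved} and applying the construction from the second half of the proof of \Cref{thm:general_condition}. The paper states this only as a brief remark, whereas you spell out the witness-tracking and backtracking details explicitly, but the underlying argument is the same.
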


\section{Stronger than Strongly-Proportional}
\label{sec:stronger}

We have so far only considered allocations which are \emph{strongly-proportional}---agents receive pieces with value strictly more than their entitlements.
Strong proportionality does not guarantee that agents receive pieces beyond just a small crumb more than their proportional piece.
It would indeed be useful if we can guarantee that agents receive a fixed positive amount more than their entitlements.
This motivates us to consider an even stronger fairness notion: given some fixed value $z > 0$, can each agent $i$ receive a piece with value more than $w_i + z$?

It is easy to adapt \Cref{alg:general_improved} to this setting by replacing $w_i$ in \Cref{ln:wi} of the algorithm with $w_i + z$---this gives an upper bound number of queries to determine the existence of such an allocation, or to compute such an allocation if it exists.

\begin{theorem}
\label{thm:stronger_ub}
Let $\mathbf{w}$ be any vector of entitlements.
Then, for any positive constant $z$, there exists an algorithm that decides whether a connected allocation exists for $n$ agents in which each agent $i$ receives a piece with value more than $w_i + z$ using at most $n \cdot 2^{n-1}$ queries.
\end{theorem}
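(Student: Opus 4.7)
The plan is to adapt the characterization in \Cref{thm:general_condition} and the dynamic-programming algorithm in \Cref{alg:general_improved} with essentially no change beyond replacing each agent's entitlement by a shifted value $w_i + z$. Concretely, for a fixed $z > 0$, I would first prove the following analog of \Cref{thm:general_condition}: a connected allocation in which every agent $i$ receives a piece worth strictly more than $w_i + z$ exists if and only if there is a permutation $\sigma : [n] \to [n]$ such that $\textsc{Mark}_\sigma(0, \mathbf{w}') < 1$, where $\mathbf{w}' = (w_1 + z, \ldots, w_n + z)$. Note that this requires no assumption that the $w'_i$ sum to $1$; only the characterization is used.

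For the forward direction, I would imitate the induction in the first half of the proof of \Cref{thm:general_condition}: given a strongly-$(w_i+z)$-proportional connected allocation with cut points $y_0 = 0 < y_1 < \cdots < y_n = 1$ and agent $\sigma(k)$ receiving $[y_{k-1}, y_k]$, I prove $x_k < y_k$ for all $k$, where $x_k$ is the $k$-th point produced by \Cref{alg:mark_sigma} run on $\mathbf{w}'$. The base case uses $V_{\sigma(1)}([0, y_1]) > w_{\sigma(1)} + z$, and the inductive step uses monotonicity of $\textsc{Mark}_{\sigma(k+1)}(\cdot, w_{\sigma(k+1)} + z)$ together with $V_{\sigma(k+1)}([y_k, y_{k+1}]) > w_{\sigma(k+1)} + z$. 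For the reverse direction, starting from $\textsc{Mark}_\sigma(0, \mathbf{w}') < 1$, I would reuse verbatim the backward epsilon-shifting construction: define $y_n = 1$, and for each $k$ from $n-1$ down to $1$, set $y_k = \textsc{Mark}_{\sigma(k+1)}(x_k, \epsilon_{k+1}/2)$, where $\epsilon_{k+1} > 0$ is the surplus of $V_{\sigma(k+1)}([x_k, y_{k+1}])$ above $w_{\sigma(k+1)} + z$. The same cancellation $(w_{\sigma(k)} + z + \epsilon_k) - \epsilon_k/2 > w_{\sigma(k)} + z$ shows every agent receives strictly more than $w_i + z$.

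For the algorithmic part, I would modify \Cref{alg:general_improved} by replacing the call $\textsc{Mark}_i(b_{N \setminus \{i\}}, w_i)$ on \Cref{ln:wi} with $\textsc{Mark}_i(b_{N \setminus \{i\}}, w_i + z)$. The cached quantity $b_N$ then represents the leftmost reachable point when the agents in $N$ each claim an $(w_i + z)$-valued prefix of the remaining cake in some order. The correctness argument is identical to the one in the proof of \Cref{thm:general_improved_ub}, but with $\mathbf{w}$ replaced by $\mathbf{w}'$ throughout, and invoking the extended characterization above rather than \Cref{thm:general_condition}. The query-count bookkeeping is unchanged: one mark query per $(N, i)$ pair with $i \in N$, giving $\sum_{k=1}^{n} k \binom{n}{k} = n \cdot 2^{n-1}$.

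No step is a genuine obstacle; the mild care point is the forward induction, where one must ensure the strict inequality $x_k < y_k$ survives when the target value is $w_i + z$ rather than $w_i$, and the backward construction, where one must check that $\textsc{Mark}_{\sigma(k+1)}(x_k, \epsilon_{k+1}/2)$ is well-defined (this holds since $\epsilon_{k+1}/2 < V_{\sigma(k+1)}([x_k, y_{k+1}])$ by construction). Both are immediate given the corresponding arguments already laid out in \Cref{thm:general_condition,thm:general_improved_ub}.
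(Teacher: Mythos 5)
Your proposal is correct and follows exactly the route the paper intends: the paper's own justification for \Cref{thm:stronger_ub} is the one-line remark that one replaces $w_i$ by $w_i+z$ in \Cref{ln:wi} of \Cref{alg:general_improved}, relying on the fact that the proofs of \Cref{thm:general_condition,thm:general_improved_ub} never use $\sum_i w_i = 1$. You simply spell out the details that the paper leaves implicit, and your two ``care points'' (strictness of $x_k < y_k$ and well-definedness of the $\epsilon_{k+1}/2$ mark) are handled correctly.
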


We now show a matching lower bound, even for hungry agents with equal entitlements.
The proof is similar in spirit to the proof of \Cref{thm:general_equal_lb}.

\begin{theorem}
\label{thm:stronger_lb}
Let $n$ be given.
Then, for any positive constant $z < \frac{1}{n(n-1)}$, any algorithm that decides whether a connected allocation exists for $n$ hungry agents in which each agent receives a piece with value more than $1/n + z$ requires $\Omega(n \cdot 2^n)$ queries.
\end{theorem}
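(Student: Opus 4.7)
The plan is to adapt the adversarial construction from the proof of \Cref{thm:general_equal_lb} so that all agents are hungry, while preserving the $\Omega(n\cdot 2^n)$ lower bound. The slack condition $z < 1/(n(n-1))$ will be used to absorb the small perturbations required to enforce hungriness.

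First, I would modify the $(2n-1)$-region cake in \Cref{fig:general_equal_lb_cake} by replacing every zero entry with a sufficiently small positive value $\epsilon$, then renormalizing each agent's total valuation to $1$. The parameter $\epsilon$ is chosen small enough (say $\epsilon \ll z/n$) that no single $\epsilon$-filled region contributes more than $z/(4n)$ to any agent. This makes every agent hungry while qualitatively preserving the structural inequalities used in the original proof: for each agent $i \in [n-1]$, the even regions remain much more valuable than the odd ones and the first $n-2$ valuable regions remain strictly more valuable than the last one, while agent $n$ still has each odd region worth approximately $1/n$.

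Next, I would run the adversarial argument by answering every query as if each agent's valuation were uniform within each region. For the ``true''-output case, the uniform instance itself admits no connected allocation with every agent getting more than $1/n + z$: agent $n$ must include portions of at least two odd regions to exceed $1/n + z$ (a single odd region yields only $1/n + O(\epsilon)$), leaving at most $n-2$ valuable regions for agents $1,\dots,n-1$. The inequality $a_i/(n-2) > 1 - a_i$ from the original proof forces each of these agents to need strictly more than one full valuable region's worth; summing gives more than $n-2$ required valuable regions, and the condition $z < 1/(n(n-1))$ together with the choice of $\epsilon$ absorbs the $z$-slack, yielding a contradiction analogous to \Cref{thm:general_equal_lb}.

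For the ``false''-output case, I would assign the two rightmost odd regions to agent $n$ (worth slightly more than $2/n > 1/n + z$ since $z < 1/n$) and reduce the remaining problem on the middle $2n-4$ regions to one with $n-1$ hungry agents having generic entitlements $w'_i = 1/(n a_i)$. The condition $z < 1/(n(n-1))$ ensures that in the scaled sub-instance, the shifted threshold $w'_i + z/a_i$ still lies strictly below $w'_i + d/2$, where $d$ is the minimum gap between subset-entitlements. This allows me to invoke (an inconsequential $z$-shifted variant of) the adversarial argument of \Cref{thm:hungry_unequal_lb}: for any agent $i$ for whom fewer than roughly $2^{n-2}$ queries about her marks have been made, I plant her $w_N$-mark inside an as-yet-unprobed interval $I_k$ to craft a hungry-agent instance admitting an allocation satisfying the stronger fairness requirement. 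After subtracting the $O(n^2)$ queries needed to expose the internal uniform structure of the $n-1$ agents, the total still leaves $\Omega(n \cdot 2^n)$, contradicting the ``false'' output.

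The main obstacle will be verifying that the $\epsilon$-perturbations required for hungriness do not interfere with the combinatorial hiding argument---specifically, that $\epsilon$-sized shifts in each $r$-mark leave the adversary enough room to plant a new $w_N$-mark inside the interval $I_k$. Choosing $\epsilon$ sufficiently small as a function of $z$ and $n$, and slightly shrinking $d$ in the sub-instance to accommodate simultaneously the $z$-slack and the $\epsilon$-perturbation, should resolve this.
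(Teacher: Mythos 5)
Your overall strategy (adversarial argument, uniform-within-regions answers, reduction of the ``false'' branch to the generic-entitlements construction of \Cref{thm:hungry_unequal_lb}) is in the right spirit, but your Case~2 has a fatal arithmetic gap. You keep the normalization $a_i = 1/(n w'_i)$ from \Cref{thm:general_equal_lb}, so after giving agent $n$ the two rightmost odd regions and rescaling the remaining sub-cake to total value $1$ for each agent $i \in [n-1]$, the requirement ``more than $1/n+z$'' becomes ``more than $(1/n+z)/a_i = w'_i + z/a_i = (1+nz)\,w'_i$.'' These thresholds sum to $(1+nz)\sum_{i} w'_i = 1+nz > 1$, i.e., they exceed the total value of the sub-cake. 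Hence \emph{no} instance the adversary could plant---regardless of where the hidden $w_N$-mark goes---admits a connected allocation meeting the stronger requirement, and the ``false'' output can never be contradicted. Your use of $z < \tfrac{1}{n(n-1)}$ to argue that the shifted threshold fits below $w'_i + d/2$ addresses the wrong constraint: the obstruction is global infeasibility of the thresholds, not the width of the unprobed interval $I_k$. The repair is to renormalize $a_i = (1/n+z)/w'_i$ so that the sub-instance thresholds are exactly $w'_i$ and sum to $1$; the hypothesis $z<\tfrac{1}{n(n-1)}$ is precisely what keeps these larger $a_i$ (and the complementary values $1-a_i$) valid, which is where the paper actually spends the $z$-slack.

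For comparison, the paper does not perturb the $(2n-1)$-region cake at all. It builds a fresh \emph{two-region} cake in which the left part is worth $a_i=(1/n+z)/w'_i$ to agent $i\in[n-1]$ and $1-1/n-z$ to agent $n$, and the right part is worth the complements; every entry is strictly positive, so all agents are hungry with no $\epsilon$-surgery and no renormalization bookkeeping. Case~1 then follows from a counting argument on fractions of the left part, and Case~2 gives agent $n$ the right part (worth exactly $1/n+z$), reduces the left part to an $(n-1)$-agent generic-entitlements instance with thresholds summing to exactly $1$, invokes \Cref{thm:hungry_unequal_lb}, and finally nudges one boundary as in \Cref{lem:hungry_one_agent} to lift agent $n$ strictly above $1/n+z$. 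Your proposal also omits this last step: even in a corrected version, the agent receiving exactly the threshold value must be handled by a boundary shift.
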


\begin{proof}
Let $n \geq 3$ and $z \in (0, \frac{1}{n(n-1)})$ be given.
Define $\epsilon$ such that
\begin{align*}
    \epsilon = \min \left\{ \frac{1}{n(n-1)} - z, \; \frac{nz}{n-1} \right\};
\end{align*}
note that $\epsilon > 0$.
Let $M$ be a sufficiently large constant (to be decided later), and for each $i \in [n-1]$, define $w'_i = \frac{M+2^{i-1}}{(n-1)M+2^{n-1}-1}$ and $a_i = \frac{1/n+z}{w'_i}$.
Note that we have $\lim_{M \to \infty} w'_i = \frac{1}{n-1}$; therefore, we choose a value $M \geq 2^n$ such that $\frac{1}{n-1} - \epsilon < w'_i < \frac{1}{n-1} + \epsilon$ for all $i \in [n-1]$.
Note also that $\sum_{i \in [n-1]} w'_i = 1$.

Consider a cake with two parts---the \emph{left} part and the \emph{right} part.
The left part is worth $a_i$ to agent $i \in [n-1]$ and $1 - 1/n - z$ to agent $n$.
The right part is worth $1-a_i$ to agent $i \in [n-1]$ and $1/n + z$ to agent $n$.
See \Cref{fig:stronger_lb_cake} for an illustration.

\begin{figure}[ht]
\centering
\begin{tabular}{|c|cc|}
\hline
\textbf{Agent $1$} & $a_1$ & $1-a_1$ \\
$\vdots$ & $\vdots$ & $\vdots$ \\
\textbf{Agent $n-1$} & $a_{n-1}$ & $1-a_{n-1}$ \\
\textbf{Agent $n$} & $1-1/n-z$ & $1/n+z$ \\ 
\hline
\end{tabular}
\caption{Construction of the cake used in the proof of \Cref{thm:stronger_lb}.}
\label{fig:stronger_lb_cake}
\end{figure}

We now show that the valuations on the cake are valid.
This is clear for agent $n$'s valuation since $0 < 1/n + z < 1$.
It suffices to show that for $i \in [n-1]$, each of $a_i$ and $1-a_i$ is less than $1$.
We have
\begin{align*}
    a_i &= \frac{\frac{1}{n}+z}{w'_i} \\
    &< \frac{\frac{1}{n}+z}{\frac{1}{n-1}-\epsilon} \tag*{(since $w'_i > \frac{1}{n-1}-\epsilon$)} \\
    &\leq \frac{\frac{1}{n} + \frac{1}{n(n-1)} - \epsilon}{\frac{1}{n-1}-\epsilon} \tag*{(since $\epsilon \leq \frac{1}{n(n-1)} - z$)} \\
    &= \frac{\frac{1}{n-1}-\epsilon}{\frac{1}{n-1}-\epsilon} \\
    &= 1.
\end{align*}
Now, instead of showing $1-a_i < 1$, we show an even stronger statement of $1-a_i < 1/n+z$.
We have
\begin{align*}
    1-a_i &= 1 - \frac{\frac{1}{n}+z}{w'_i} \\
    &< 1 - \frac{\frac{1}{n}+z}{\frac{1}{n-1}+\epsilon} \tag*{(since $w'_i < \frac{1}{n-1}+\epsilon$)} \\
    &= \frac{1+(n-1)\epsilon}{1+(n-1)\epsilon} - \frac{\frac{n-1}{n}+(n-1)z}{1+(n-1)\epsilon} \\
    &= \frac{\frac{1}{n} + (n-1)\epsilon - (n-1)z}{1+(n-1)\epsilon}
\end{align*}
and
\begin{align*}
    \frac{1}{n} + z = \frac{(1+(n-1)\epsilon)(\frac{1}{n} + z)}{1+(n-1)\epsilon} = \frac{\frac{1}{n} +(n-1)(\frac{1}{n}+z)\epsilon + z}{1+(n-1)\epsilon}.
\end{align*}
Taking the difference between the two expressions, we have
\begin{align*}
    (1-a_i) - \left( \frac{1}{n} + z \right) &< \frac{(n-1)(1-\frac{1}{n}-z)\epsilon - nz}{1+(n-1)\epsilon} \\
    &< \frac{(n-1)\epsilon - nz}{1+(n-1)\epsilon} \tag*{(since $1 - \frac{1}{n} - z < 1$)} \\
    &\leq \frac{nz - nz}{1+(n-1)\epsilon} \tag*{(since $\epsilon \leq \frac{nz}{n-1}$)} \\
    &= 0,
\end{align*}
showing indeed that $1-a_i < 1/n+z$.
These show that the valuations are valid.

We show that any algorithm that makes fewer than $(n-1)(2^{n-3}-1) \in \Omega(n \cdot 2^n)$ queries may not be able to decide whether a connected strongly-proportional allocation exists.
The proof idea is similar to that in the proofs of \Cref{thm:hungry_equal_lb,thm:hungry_unequal_lb}.
We assume that the answer to every query made by the algorithm is consistent with the instance where the valuation of each agent is uniformly distributed in each of the left and the right parts.
We show that regardless of what the algorithm outputs as its answer, there are instances which contradict the answer.

\textbf{Case 1: The algorithm outputs ``true''. }
Consider the instance where the valuation of each agent is uniformly distributed in each part.
Note that all agents are hungry.
We show that a connected allocation in which each agent receives a piece with value more than $1/n + z$ cannot exist in this instance.

Suppose on the contrary that such an allocation exists.
Assume first that agent $n$ receives the rightmost piece.
Since the right part of the cake is worth $1/n + z$ to agent $n$, agent $n$'s piece has to contain the whole of the right part.
The remaining $n-1$ agents' pieces are contained in the left part.
For $i \in [n-1]$, agent $i$ receives more than $(1/n + z)/a_i = w'_i$ of the left part.
Therefore, agents $1$ to $n-1$ receive more than $\sum_{i \in [n-1]} w'_i = 1$ of the left part in total.
This is not possible.

Therefore, some agent $j \in [n-1]$ receives the rightmost piece.
Since $1 - a_j < 1/n + z$, agent $j$'s piece has to contain the whole of the right part.
The remaining $n-1$ agents' pieces are contained in the left part.
For $i \in [n-1] \setminus \{j\}$, agent $i$ receives more than $(1/n + z)/a_i = w'_i$ of the left part.
Agent $n$ receives more than 
\begin{align*}
    \frac{1/n + z}{1 - 1/n - z} > \frac{1/n + z}{1 - 1/n} = \frac{1}{n-1} + \frac{nz}{n-1} \geq \frac{1}{n-1} + \epsilon > w'_j
\end{align*}
of the left part, where the second inequality holds since $\epsilon \leq \frac{nz}{n-1}$.
Therefore, agents in $[n] \setminus \{j\}$ receive more than $\sum_{i \in [n-1]} w'_i = 1$ of the left part in total.
This is not possible, and hence, no such allocation exists.

\textbf{Case 2: The algorithm outputs ``false''. }
We shall now show by construction that the information provided to the algorithm is also consistent with an instance with a connected allocation in which each agent receives a piece with value more than $1/n + z$, resulting in a contradiction.

Let agent $n$ receive the right part of the cake, so that agent $n$ receives a piece with value $1/n + z$.
While the value of this piece is not more than $1/n + z$ yet, we will fix this later.
Divide all valuations of the left part of the cake and entitlements by $a_i$ for each $i \in [n-1]$---note that this does not change the existence of such a connected allocation.
Then, this is equivalent to a cake with value $1$ to every agent $i \in [n-1]$ such that agent $i$ needs to receive a piece of cake with value more than $\frac{1/n+z}{a_i} = w'_i$.
Note that $\sum_{i \in [n-1]} w'_i = 1$, so we have reduced the problem to finding a connected allocation on a modified instance with $n-1$ hungry agents such that agent $i \in [n-1]$ has an entitlement of $w'_i$.

We claim that the entitlements are generic.
To see this, let $N, N' \subseteq [n-1]$ such that $\sum_{i \in N} w'_i = \sum_{i \in N'} w'_i$.
Since the denominators of the $w'_i$'s are equal to each other, we have $\sum_{i \in N} (M+2^{i-1}) = \sum_{i \in N'} (M+2^{i-1})$.
Since $M$ is larger than $\sum_{i \in [n-1]} 2^{i-1}$, we must have $|N| = |N'|$, which implies that $\sum_{i \in N} 2^{i-1} = \sum_{i \in N'} 2^{i-1}$.
The only way this is possible is when $N = N'$, which proves that the entitlements are generic.

Since the $n-1$ agents are hungry in this modified instance, we can use the construction in the proof of \Cref{thm:hungry_unequal_lb} for agents $1, \ldots, n-1$, which shows that, with fewer than $(n-1)2^{n-3}$ queries, the answers are consistent with the existence of a connected strongly-proportional allocation.
Note that the marks between the left and the right part of the cake are known.
This means that a total of $n-1$ marks are known, which requires at most the same number of queries.
Therefore, with fewer than $(n-1)2^{n-3} - (n-1) \in \Omega(n \cdot 2^n)$ queries, there exists an instance consistent with the information provided by the queries that admits a connected strongly-proportional allocation for agents in $[n-1]$.

We now have a connected proportional allocation such that each agent in $[n-1]$ receives a piece with value more than $1/n + z$ and agent $n$ receives a piece with value exactly $1/n + z$.
By a similar proof as that in \Cref{lem:hungry_one_agent}, we can slightly move the boundary of agent $n$'s piece such that every agent receives a piece with value more than $1/n + z$.
This contradicts the output of the algorithm.
\end{proof}

\Cref{thm:stronger_lb} shows that no algorithm can decide whether there exists a connected allocation in which the \emph{egalitarian value} is more than $1/n+z$---the egalitarian value is the smallest value of an agent's piece in an allocation.
\citet{aumann2012computing} prove a closely-related result, but in a different computational model.
They assume that the agents' valuations are piecewise-constant and given explicitly to the algorithm.
In this model, they prove that it is NP-hard to approximate the optimal egalitarian value to a factor better than 2.
Specifically, they show a reduction from an instance of the NP-hard problem 3-dimensional matching (3DM) to a cake-cutting instance.
They show that if the answer to the 3DM instance is ``yes'', then there exists an allocation where each agent gets value at least $k$; and if the answer is ``no'', then every allocation gives some agent value at most $k/2$. In their reduction, $k$ is at least $4/n$.
In contrast, we provide an unconditional exponential lower bound in the (harder) query model.
Also, our result holds for a different range of possible $k = 1/n + z$ values.

\Cref{thm:stronger_ub,thm:stronger_lb} give a tight bound to the number of queries, even for hungry agents with equal entitlements.

\begin{theorem}
\label{thm:stronger_tight}
Let $n$ be given.
Then, for any positive $z < \frac{1}{n(n-1)}$, the number of queries required to decide the existence of a connected allocation for $n$ agents in which every agent receives a piece with value more than $w_i + z$, or to compute such an allocation if it exists, is in $\Theta(n \cdot 2^n)$, even for hungry agents with equal entitlements.
\end{theorem}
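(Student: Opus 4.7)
The plan is to combine the two preceding results into a single tight bound and then extend the decision algorithm into a computation algorithm at no asymptotic cost, mirroring the pattern used for \Cref{thm:general_unequal_tight}. For the decision problem, the upper bound of $n \cdot 2^{n-1}$ queries follows immediately from \Cref{thm:stronger_ub}, which holds for any entitlement vector (including equal entitlements and even hungry agents), and the lower bound $\Omega(n \cdot 2^n)$ is delivered by \Cref{thm:stronger_lb}, which is already stated in the strongest form needed---restricted to hungry agents with equal entitlements. Together these pin the decision complexity at $\Theta(n \cdot 2^n)$ in the required regime.

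For the computation half, I would note that the adapted \Cref{alg:general_improved} (with $w_i$ replaced by $w_i + z$ in \Cref{ln:wi}) already computes, for every subset $N \subseteq [n]$, the leftmost point $b_N$ reachable by some ordering of the agents in $N$ where each agent $i$ consumes a rightmost piece of value $w_i + z$. When $b_{[n]} < 1$, the subset values $b_N$ implicitly encode a witnessing permutation $\sigma$: recover $\sigma$ by backtracking, at each step picking the agent $i \in N$ that attained the minimum in the definition of $b_N$, and setting $\sigma(|N|) = i$, $N \leftarrow N \setminus \{i\}$. This backtracking uses no additional queries because the relevant comparisons are cached during the forward pass. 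Thus we obtain a permutation $\sigma$ with $\textsc{Mark}_\sigma(0, \mathbf{w} + z\mathbf{1}) < 1$ together with the intermediate marks $x_0, \ldots, x_n$.

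To turn these marks into an allocation in which every agent strictly exceeds $w_i + z$, I would invoke the constructive procedure from the $(\Leftarrow)$ direction of \Cref{thm:general_condition} verbatim, with $w_i$ replaced by $w_i + z$: working right-to-left, set $y_n = 1$ and, having placed $y_{k+1} > x_{k+1}$, compute $\epsilon_{k+1} = V_{\sigma(k+1)}([x_k, y_{k+1}]) - (w_{\sigma(k+1)} + z) > 0$ and define $y_k = \textsc{Mark}_{\sigma(k+1)}(x_k, \epsilon_{k+1}/2)$. The resulting cut points $0 = y_0 < y_1 < \cdots < y_n = 1$ give agent $\sigma(k)$ a piece worth strictly more than $w_{\sigma(k)} + z$ by the same additivity argument used in \Cref{thm:general_condition}. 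This post-processing uses only $O(n)$ additional queries (one eval and one mark per boundary), so the total query count remains $O(n \cdot 2^n)$, matching the lower bound.

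The only real subtlety is checking that the lower bound in \Cref{thm:stronger_lb}, which was proved for deciding existence, transfers to the computation problem: but any algorithm that outputs a valid allocation can be used to decide existence (it either succeeds or fails to produce one), so the $\Omega(n \cdot 2^n)$ bound applies a fortiori. I do not anticipate any substantive obstacle here---the theorem is essentially a packaging statement---so the main care required is simply to verify that replacing $w_i$ by $w_i + z$ throughout \Cref{alg:general_improved} and the construction in \Cref{thm:general_condition} preserves correctness, which it does because neither argument used any property of $w_i$ beyond additivity and the fact that $\sum_i w_i$ equals the total cake value in the feasibility check---and here the latter is only used through the condition $b_{[n]} < 1$, which is exactly what the modified algorithm tests.
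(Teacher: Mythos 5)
Your proposal is correct and follows essentially the same route as the paper: \Cref{thm:stronger_tight} is a packaging statement whose decision half is exactly \Cref{thm:stronger_ub} plus \Cref{thm:stronger_lb}, and whose computation half is obtained, as you describe, by backtracking through the cached $b_N$ values and reusing the constructive $(\Leftarrow)$ direction of \Cref{thm:general_condition} with $w_i$ replaced by $w_i + z$. Your added detail on the backtracking and on transferring the lower bound to the computation problem is consistent with (and slightly more explicit than) what the paper states.
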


Our results complete the picture on the query complexity of computing a connected allocation that guarantees each agent a piece with a certain value.
For $n$ hungry agents, the query complexity of computing a connected allocation in which each agent receives a piece with value at least $1/n$, more than $1/n$, and more than $1/n + z$ for some small $z > 0$ is $\Theta(n \log n)$, $\Theta(n^2)$, and $\Theta(n \cdot 2^n)$ respectively, if such an allocation exists.

\section{Pies}
\label{sec:pies}

We now consider a \emph{pie}, where the resource is modeled by a circle instead of by an interval.
We use $C = [0, 1)$ to represent the pie, but in contrast to the cake version, the endpoints $0$ and $1$ are ``joined'' together---they are considered the same point.
Therefore, the piece $[0, a] \cup [b, 1)$ is also considered a connected piece for any $a, b \in C$ with $a \leq b$.
Several papers have studied the special properties of pie-cutting \citep{stromquist2007pie,thomson2007children,brams2008proportional,barbanel2009cutting}.

We show that the problem of deciding the existence of a connected strongly-proportional allocation of a pie is intractable.

\begin{theorem}
\label{thm:pie}
No finite algorithm can decide the existence of a connected strongly-proportional allocation of a pie, even for hungry agents with equal entitlements.
\end{theorem}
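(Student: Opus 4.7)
My plan is an adversarial argument. Pit the ``uniform'' instance $I_0$, where every agent's valuation is the normalized arc-length measure on the pie, against a carefully perturbed instance $I_1$, and show that any finite algorithm sees identical transcripts on the two instances yet the correct answers differ. First observe that $I_0$ admits no strongly-proportional allocation: for any connected partition $(A_1, \ldots, A_n)$ we have $\sum_i V_i(A_i) = \sum_i |A_i| = 1$, so the strict inequalities $V_i(A_i) > 1/n$ cannot hold simultaneously. Thus any correct algorithm must return ``false'' on $I_0$.

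Now suppose a finite algorithm $A$ decides the problem. Run it on $I_0$ and let $P_1$ be the finite set of points appearing as endpoints of eval queries on agent~$1$ or as outputs of mark queries on agent~$1$; these points split the pie into arcs $J_1, \ldots, J_m$ of lengths $\ell_j$ summing to $1$. The uniform answers pin down $V_1(J_j) = \ell_j$ for each~$j$, but leave us free to redistribute mass inside each $J_j$ (keeping $V_1$ strictly increasing so agent~$1$ remains hungry). The crux is a combinatorial claim: there is a cyclic block $J_L, \ldots, J_R$ with $\sum_{j=L}^R \ell_j > 1/n$ and $\sum_{j=L+1}^{R-1} \ell_j < 1/n$. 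To see this, start from any $L$ and enlarge $R$ until the block length first exceeds $1/n$; since $\ell_L > 0$, removing $J_L$ from that block strictly shrinks its length below $1/n$, giving the interior bound. (If some single arc has $\ell_j > 1/n$, take $L = R = j$ and the interior is empty.)

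Given such a block, pick a sub-arc $A_1$ with left endpoint strictly inside $J_L$ and right endpoint strictly inside $J_R$, with $|A_1| < 1/n$ but $|A_1 \cap J_L|$ and $|A_1 \cap J_R|$ chosen as close as we like to $\ell_L$ and $\ell_R$. Redistribute $V_1$ on $J_L$ and $J_R$ to concentrate nearly all of their mass inside $A_1 \cap J_L$ and $A_1 \cap J_R$, leaving only a tiny positive residue outside for hungriness; keep each interior arc uniform. Then $V_1(A_1)$ can be pushed arbitrarily close to $\sum_{j=L}^R \ell_j > 1/n$, so $V_1(A_1) > 1/n$ while $|A_1| < 1/n$. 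Leave agents $2, \ldots, n$ uniform; call the resulting instance $I_1$. Since $|A_1| < 1/n$, the complementary arc has length $> (n-1)/n$ and can be cut into $n-1$ sub-arcs each of length $> 1/n$, one per remaining agent, producing a connected strongly-proportional allocation in $I_1$. On the other hand, $V_1(J_j) = \ell_j$ is preserved for every $j$ by construction, so every eval query on agent~$1$ is answered identically; hungry agents have unique marks, so mark queries also agree with the uniform answers; and agents $2, \ldots, n$ are untouched. Thus $A$ sees the identical transcript on $I_0$ and $I_1$ and is forced to output ``false'' on $I_1$, contradicting the existence of an SP allocation there. The main obstacle is the combinatorial block claim---once we have it, concentrating mass on $A_1$ and exhibiting the SP allocation on $I_1$ are routine.
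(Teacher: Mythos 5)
Your argument is correct in substance but takes a genuinely different route from the paper. The paper also plays the uniform instance against a perturbation, but its key device is the finiteness of the queried point set $P$ on the \emph{circle}: it picks a point $x$ whose orbit $\{x, x+1/n, \ldots, x+(n-1)/n\}$ avoids $P$, shifts agent $1$'s $1/n$-mark measured from $x$ by a small $\epsilon$, and then invokes the cake characterization (\Cref{thm:hungry_equal_condition}) after cutting the pie open at $x$. You instead prove existence of the strongly-proportional allocation in the perturbed instance directly: you find a cyclic block of consecutive unqueried arcs of total length exceeding $1/n$ whose interior has length below $1/n$, concentrate agent $1$'s mass onto a short sub-arc $A_1$ with $|A_1| < 1/n$ but $V_1(A_1) > 1/n$, and give the remaining $n-1$ uniform agents equal-length pieces of the complement. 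Your version is more self-contained (no appeal to the cake theorem and no orbit trick), at the cost of the explicit block/concentration construction; the paper's version is shorter because it reuses \Cref{thm:hungry_equal_condition}. Both hinge on the same underlying fact that a finite algorithm leaves uncountably many unexamined starting points on the circle.

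Two small repairs are needed. First, your set $P_1$ must also contain the \emph{input} points $x$ of all $\textsc{Mark}_1(x,r)$ queries (and, more generally, every point mentioned by the algorithm, as the paper does); otherwise such an $x$ could land inside $J_L$ or $J_R$, the redistribution would change $V_1([x,z])$, and the mark query could return a different answer, breaking transcript consistency. With inputs and outputs both in $P_1$, preservation of all arc values $V_1(J_j)$ plus hungriness (hence uniqueness of marks) gives identical answers. Second, the phrase asking for $|A_1 \cap J_L|$ and $|A_1 \cap J_R|$ to be close to $\ell_L$ and $\ell_R$ is inconsistent with $|A_1| < 1/n$; what you actually need, and what your subsequent mass-concentration step delivers, is that $V_1(A_1 \cap J_L)$ and $V_1(A_1 \cap J_R)$ are close to $\ell_L$ and $\ell_R$ while the \emph{lengths} $|A_1 \cap J_L|$ and $|A_1 \cap J_R|$ stay small. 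With those wording fixes the proof goes through.
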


\begin{proof}
Suppose by way of contradiction that some finite algorithm decides the existence of a connected strongly-proportional allocation of a pie for $n$ agents with equal entitlements.
Assume that the information provided to the algorithm by eval and mark queries is consistent with that where every agent's valuation is uniformly distributed over the pie (in which case there is no connected strongly-proportional allocation of the pie), and so the algorithm should output ``false''.
However, we shall now show that the information provided to the algorithm is also consistent with an instance with a connected strongly-proportional allocation.
This means that the algorithm is not able to differentiate between the two, resulting in a contradiction.

Let $P$ be the set of all points on the pie mentioned by the algorithm or by the queries---for example, if an $\textsc{Eval}_i(x, y)$ query is made by the algorithm, or if a $\textsc{Mark}_i(x, r)$ query is made by the algorithm and $y$ is returned, then $x$ and $y$ are added to $P$.
Since the algorithm is finite, $P$ is finite.
For $x \in C$, define $\bar{x} = \{x, x+1/n, \ldots, x+(n-1)/n\} \subseteq C$ where all numbers in the set are modulo $1$.
(From now on, every point mentioned is modulo $1$.)
Since $P$ is finite, there exists a point $x \in C$ such that $\bar{x} \cap P = \emptyset$.
Fix $x$.
Let $\epsilon \in (0, 1/n)$ be a number smaller than the distance between any element in $\bar{x}$ and any element in $P$.

Construct agent $1$'s valuation function such that $V_1([x, p]) = p-x$ for all $p \in P$, $V_1([x, x+1/n+\epsilon]) = 1/n$, and its distribution between every two adjacent points in $P \cup \{x+1/n+\epsilon\}$ is uniform within the respective intervals based on these valuations---note that this construction is valid and unique since these known $r$-marks (starting at the point $x$) are strictly increasing in $r$.
All other $n-1$ agents have valuation functions uniformly distributed over the pie.
Note that all agents are hungry.
By changing the axis to start at the point $x$, we see that the $1/n$-mark (starting at the point $x$) of agent $1$ is at $x+1/n+\epsilon$ while that of the other agents are at $x+1/n$.
Therefore, the $1/n$-mark of agent $1$ is different from that of the other agents.
By \Cref{thm:hungry_equal_condition}, there exists a connected strongly-proportional allocation starting from the point $x$.
This means that the algorithm is not able to differentiate between the two instances.
\end{proof}

\section{Conclusion}

We have studied necessary and sufficient conditions for the existence of a connected strongly-proportional allocation on the interval cake (\Cref{thm:hungry_equal_condition,thm:general_condition}).
We have shown that computing this condition requires $\Theta(n \cdot 2^n)$ queries even for agents with equal entitlements (\Cref{thm:general_unequal_tight}) or hungry agents with generic entitlements (\Cref{thm:hungry_unequal_tight}), and $\Theta(n^2)$ for hungry agents with equal entitlements (\Cref{thm:hungry_equal_tight}).
The same bounds hold for the computation of such an allocation if it exists.
We have also shown that for connected allocations where each agent receives a small value $z$ more than their proportional share, the number of queries to decide the existence of such allocations is in $\Theta(n \cdot 2^n)$ (\Cref{thm:stronger_tight}).
Finally, we have shown that no finite algorithm can decide the existence of a connected strongly-proportional allocation of a pie (\Cref{thm:pie}).

A natural question that arose from our work is whether there is an algorithm that (asymptotically) attains the lower bound in \eqref{eq:hungry_unequal_lb} for hungry agents with entitlements that are neither generic nor equal.

Additionally, our work can be extended in the following ways:

\begin{itemize}
\item \textbf{Chores.} \emph{Chore-cutting} is a variant of cake-cutting in which agents have \emph{negative} valuations for every piece of the cake.
\item \textbf{Beyond the unit interval.} We can consider cakes with more complex topologies, such as graphical cakes \citep{bei2021dividing}, tangled cakes \citep{igarashi2023fair}, and two-dimensional cakes \citep{segal2017fair}.
\item \textbf{Envy-freeness.} It is known that, in every cake-cutting instance, a connected envy-free allocation exists \citep{stromquist1980how,su1999rental}.
What conditions are necessary and sufficient for the existence of a connected strongly-proportional allocation that is also envy-free?
\end{itemize}
We may also consider a weaker fairness notion of \emph{proportionality} instead---we give a brief discussion in \Cref{ap:proportionality}.

\section*{Acknowledgements}
This work was partially supported by the Hungarian Scientific Research Fund (OTKA grant K143858), by the Deutsche Forschungsgemeinschaft (DFG, German Research Foundation) under grant number 513023562, by NKFIH OTKA-129211, by the Israel Science Foundation grant 712/20, and by the Singapore Ministry of Education under grant number MOE-T2EP20221-0001.
We thank Tassle and Neal Young 
for their helpful answers,%
\footnote{https://cstheory.stackexchange.com/q/53901/9453}
Warut Suksompong for his feedback on an earlier draft, and participants of the COMSOC online seminar and the anonymous ECAI 2024 reviewers for their comments.
We are grateful to an anonymous ECAI 2024 reviewer for suggesting the solution to Case 2 in the proof of \Cref{lem:hungry_one_agent}.

\bibliographystyle{plainnat}
\bibliography{main}

\begin{thebibliography}{31}
\providecommand{\natexlab}[1]{#1}
\providecommand{\url}[1]{\texttt{#1}}
\expandafter\ifx\csname urlstyle\endcsname\relax
  \providecommand{\doi}[1]{doi: #1}\else
  \providecommand{\doi}{doi: \begingroup \urlstyle{rm}\Url}\fi

\bibitem[Aumann et~al.(2012)Aumann, Dombb, and Hassidim]{aumann2012computing}
Yonatan Aumann, Yair Dombb, and Avinatan Hassidim.
\newblock Computing socially-efficient cake divisions.
\newblock \emph{CoRR}, abs/1205.3982, 2012.

\bibitem[Barbanel(1996{\natexlab{a}})]{barbanel1996game}
Julius Barbanel.
\newblock Game-theoretic algorithms for fair and strongly fair cake division
  with entitlements.
\newblock \emph{Colloquium Mathematicae}, 69\penalty0 (1):\penalty0 59--73,
  1996{\natexlab{a}}.
\newblock URL \url{http://eudml.org/doc/210327}.

\bibitem[Barbanel(1996{\natexlab{b}})]{barbanel1996super}
Julius~B Barbanel.
\newblock Super envy-free cake division and independence of measures.
\newblock \emph{Journal of Mathematical Analysis and Applications},
  197\penalty0 (1):\penalty0 54--60, 1996{\natexlab{b}}.

\bibitem[Barbanel et~al.(2009)Barbanel, Brams, and
  Stromquist]{barbanel2009cutting}
Julius~B Barbanel, Steven~J Brams, and Walter Stromquist.
\newblock Cutting a pie is not a piece of cake.
\newblock \emph{The American Mathematical Monthly}, 116\penalty0 (6):\penalty0
  496--514, 2009.

\bibitem[Bei and Suksompong(2021)]{bei2021dividing}
Xiaohui Bei and Warut Suksompong.
\newblock Dividing a graphical cake.
\newblock In \emph{Proceedings of the 35th AAAI Conference on Artificial
  Intelligence (AAAI)}, pages 5159--5166, 2021.

\bibitem[Brams et~al.(2008)Brams, Jones, and Klamler]{brams2008proportional}
Steven~J Brams, Michael~A Jones, and Christian Klamler.
\newblock Proportional pie-cutting.
\newblock \emph{International Journal of Game Theory}, 36:\penalty0 353--367,
  2008.

\bibitem[Ch{\`e}ze(2020)]{cheze2020envy}
Guillaume Ch{\`e}ze.
\newblock Envy-free cake cutting: A polynomial number of queries with high
  probability.
\newblock \emph{CoRR}, abs/2005.01982, 2020.

\bibitem[Crew et~al.(2020)Crew, Narayanan, and
  Spirkl]{crew2020disproportionate}
Logan Crew, Bhargav Narayanan, and Sophie Spirkl.
\newblock Disproportionate division.
\newblock \emph{Bulletin of the London Mathematical Society}, 52\penalty0
  (5):\penalty0 885--890, 2020.

\bibitem[Cseh and Fleiner(2020)]{cseh2020complexity}
{\'A}gnes Cseh and Tam{\'a}s Fleiner.
\newblock The complexity of cake cutting with unequal shares.
\newblock \emph{ACM Transactions on Algorithms (TALG)}, 16\penalty0
  (3):\penalty0 1--21, 2020.

\bibitem[Dubins and Spanier(1961)]{dubins1961cut}
Lester~E Dubins and Edwin~H Spanier.
\newblock How to cut a cake fairly.
\newblock \emph{The American Mathematical Monthly}, 68\penalty0 (1P1):\penalty0
  1--17, 1961.

\bibitem[Elkind et~al.(2022)Elkind, Segal-Halevi, and
  Suksompong]{elkind2022mind}
Edith Elkind, Erel Segal-Halevi, and Warut Suksompong.
\newblock Mind the gap: Cake cutting with separation.
\newblock \emph{Artificial Intelligence}, 313:\penalty0 103783, 2022.

\bibitem[Even and Paz(1984)]{even1984note}
Shimon Even and Azaria Paz.
\newblock A note on cake cutting.
\newblock \emph{Discrete Applied Mathematics}, 7\penalty0 (3):\penalty0
  285--296, 1984.

\bibitem[Goldberg et~al.(2020)Goldberg, Hollender, and
  Suksompong]{goldberg2020contiguous}
Paul~W Goldberg, Alexandros Hollender, and Warut Suksompong.
\newblock Contiguous cake cutting: Hardness results and approximation
  algorithms.
\newblock \emph{Journal of Artificial Intelligence Research}, 69:\penalty0
  109--141, 2020.

\bibitem[Igarashi and Zwicker(2023)]{igarashi2023fair}
Ayumi Igarashi and William~S Zwicker.
\newblock Fair division of graphs and of tangled cakes.
\newblock \emph{Mathematical Programming}, pages 1--45, 2023.

\bibitem[Jank{\'o} and Jo{\'o}(2022)]{janko2021cutting}
{\relax Zs}uzsanna Jank{\'o} and Attila Jo{\'o}.
\newblock Cutting a cake for infinitely many guests.
\newblock \emph{The Electronic Journal of Combinatorics Volume 29, Issue 1},
  2022.

\bibitem[Jank{\'o} et~al.(2024)Jank{\'o}, Jo{\'o}, Segal-Halevi, and
  Yuen]{janko2024connectedecai}
{\relax Zs}uzsanna Jank{\'o}, Attila Jo{\'o}, Erel Segal-Halevi, and Sheung~Man
  Yuen.
\newblock On connected strongly-proportional cake-cutting.
\newblock In \emph{Proceedings of the 27th European Conference on Artificial
  Intelligence}, 2024.
\newblock Forthcoming.

\bibitem[Procaccia(2016)]{procaccia2016handbook}
Ariel~D Procaccia.
\newblock Cake cutting algorithms.
\newblock In Felix Brandt, Vincent Conitzer, Ulle Endriss, J\'{e}r\^{o}me Lang,
  and Ariel~D. Procaccia, editors, \emph{Handbook of Computational Social
  Choice}, chapter~13, pages 311--329. Cambridge University Press, 2016.

\bibitem[Rebman(1979)]{rebman1979get}
Kenneth Rebman.
\newblock How to get (at least) a fair share of the cake.
\newblock \emph{Mathematical Plums (Edited by R. Honsberger), The Mathematical
  Association of America}, pages 22--37, 1979.

\bibitem[Robertson and Webb(1998)]{robertson1998cake}
Jack Robertson and William Webb.
\newblock \emph{Cake-Cutting Algorithms: Be Fair if You Can}.
\newblock Peters/CRC Press, 1998.

\bibitem[Segal-Halevi(2019)]{segal2019cake}
Erel Segal-Halevi.
\newblock Cake-cutting with different entitlements: How many cuts are needed?
\newblock \emph{Journal of Mathematical Analysis and Applications},
  480\penalty0 (1):\penalty0 123382, 2019.

\bibitem[Segal-Halevi et~al.(2017)Segal-Halevi, Nitzan, Hassidim, and
  Aumann]{segal2017fair}
Erel Segal-Halevi, Shmuel Nitzan, Avinatan Hassidim, and Yonatan Aumann.
\newblock Fair and square: Cake-cutting in two dimensions.
\newblock \emph{Journal of Mathematical Economics}, 70:\penalty0 1--28, 2017.

\bibitem[Steinhaus(1948)]{steinhaus1948problem}
Hugo Steinhaus.
\newblock The problem of fair division.
\newblock \emph{Econometrica}, 16:\penalty0 101--104, 1948.

\bibitem[Stromquist(1980)]{stromquist1980how}
Walter Stromquist.
\newblock How to cut a cake fairly.
\newblock \emph{American Mathematical Monthly}, 87\penalty0 (8):\penalty0
  640--644, 1980.

\bibitem[Stromquist(2007)]{stromquist2007pie}
Walter Stromquist.
\newblock A pie that can't be cut fairly (revised for dsp).
\newblock In \emph{Dagstuhl Seminar Proceedings}. Schloss
  Dagstuhl-Leibniz-Zentrum f{\"u}r Informatik, 2007.

\bibitem[Stromquist(2008)]{stromquist2008envy}
Walter Stromquist.
\newblock Envy-free cake divisions cannot be found by finite protocols.
\newblock \emph{the electronic journal of combinatorics}, 15\penalty0
  (1):\penalty0 R11, 2008.

\bibitem[Su(1999)]{su1999rental}
Francis~Edward Su.
\newblock Rental harmony: Sperner's lemma in fair division.
\newblock \emph{American Mathematical Monthly}, 106\penalty0 (10):\penalty0
  930--942, 1999.

\bibitem[Suksompong(2021)]{suksompong2021constraints}
Warut Suksompong.
\newblock Constraints in fair division.
\newblock \emph{ACM SIGecom Exchanges}, 19\penalty0 (2):\penalty0 46--61, 2021.

\bibitem[Thomson(2007)]{thomson2007children}
William Thomson.
\newblock Children crying at birthday parties. why?
\newblock \emph{Economic Theory}, 31\penalty0 (3):\penalty0 501--521, 2007.

\bibitem[Webb(1999)]{webb1999algorithm}
William~A Webb.
\newblock An algorithm for super envy-free cake division.
\newblock \emph{Journal of mathematical analysis and applications},
  239\penalty0 (1):\penalty0 175--179, 1999.

\bibitem[Woeginger and Sgall(2007)]{woeginger2007complexity}
Gerhard~J Woeginger and Ji\v{r}\'{i} Sgall.
\newblock On the complexity of cake cutting.
\newblock \emph{Discrete Optimization}, 4\penalty0 (2):\penalty0 213--220,
  2007.

\bibitem[Woodall(1986)]{woodall1986note}
Douglas~R Woodall.
\newblock A note on the cake-division problem.
\newblock \emph{J. Comb. Theory, Ser. A}, 42\penalty0 (2):\penalty0 300--301,
  1986.

\end{thebibliography}

\clearpage
\appendix

\section{Left-Marks and Right-Marks}
\label{ap:left_right_marks}

Our results assume that algorithms have access to eval and right-mark queries in the Robertson-Webb model.
We show that our results also hold if algorithms have access to eval and \emph{left-mark} queries.

For each agent $i \in [n]$, value $r \in [0, 1]$, and point $x \in C$, define \emph{left-mark} such that \emph{$\textsc{Left-Mark}_i(x, r)$} is the \emph{leftmost} (smallest) point $z \in C$ such that $V_i([x, z]) = r$ (such a point exists due to the continuity of the valuations); if $V_i([x, 1]) < r$, then $\textsc{Left-Mark}_i(x, r)$ returns $\infty$.
We define \emph{$\textsc{Right-Mark}_i(x, r)$} to be the same as $\textsc{Mark}_i(x, r)$ in the main text, but use ``right-mark'' in this section to avoid confusion with left-mark.
Note that for a hungry agent $i$, $\textsc{Left-Mark}_i(x, r) = \textsc{Right-Mark}_i(x, r)$ for all $x \in C, r \in [0, 1]$.

Let $\mathcal{I}$ be an instance with $n$ agents with valuation functions $(V_i)_{i \in [n]}$ and entitlements $\mathbf{w}$.
A \emph{mirrored instance} $\cev{\mathcal{I}}$ of $\mathcal{I}$ is the instance with $n$ agents with valuation functions $(\cev{V}_i)_{i \in [n]}$ and entitlements $\mathbf{w}$ such that $\cev{V}_i([x, y]) = V_i([1-y, 1-x])$ for all $x, y \in C$ with $x \leq y$.
In other words, the cake in $\cev{\mathcal{I}}$ is ``mirrored'' from the cake in $\mathcal{I}$.
A class of instances $\mathcal{C}$ is \emph{closed under mirror} if $\mathcal{I} \in \mathcal{C}$ implies that $\cev{\mathcal{I}} \in \mathcal{C}$.

Note that the classes of instances we consider in our results are closed under mirror.
We now show that if there is an algorithm, having access to right-mark queries, that can determine the existence of a connected strongly-proportional allocation in a class of instances closed under mirror, then there exists another algorithm, having access to left-mark queries, that can also do the same in the same class of instances.
Furthermore, the number of queries made by the new algorithm is within a factor of $2$ from the number of queries made by the old algorithm.
This shows that our results hold in whichever model of Robertson-Webb, and the use of right-mark is only for convenience.

\begin{proposition}
Let $\mathcal{C}$ be a class of instances closed under mirror.
Suppose there exists an algorithm $\mathcal{A}$ such that for each instance in $\mathcal{C}$, algorithm $\mathcal{A}$ can determine the existence of a connected strongly-proportional allocation in the instance using at most $k$ eval and right-mark queries and using no left-mark queries.
Then, there exists an algorithm $\mathcal{B}$ such that for each instance in $\mathcal{C}$, algorithm $\mathcal{B}$ can determine the existence of a connected strongly-proportional allocation in the instance using at most $2k$ eval and left-mark queries and using no right-mark queries.
\end{proposition}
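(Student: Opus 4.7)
The plan is to build $\mathcal{B}$ as a simulator of $\mathcal{A}$ running on the mirrored instance $\cev{\mathcal{I}}$. First I would verify the basic equivalence: the map $X_i \mapsto \{1-x : x \in X_i\}$ is a connectivity- and value-preserving bijection between allocations of $\mathcal{I}$ and of $\cev{\mathcal{I}}$, since by definition $\cev{V}_i([a, b]) = V_i([1-b, 1-a])$ and entitlements are unchanged. Hence $\mathcal{I}$ admits a connected strongly-proportional allocation if and only if $\cev{\mathcal{I}}$ does. Because $\mathcal{C}$ is closed under mirror, $\cev{\mathcal{I}} \in \mathcal{C}$, so $\mathcal{A}$ correctly decides the existence question on $\cev{\mathcal{I}}$. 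Thus it suffices for $\mathcal{B}$, running on $\mathcal{I}$, to simulate $\mathcal{A}$ on $\cev{\mathcal{I}}$ using only eval and left-mark queries on $\mathcal{I}$, and then return $\mathcal{A}$'s verdict.

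The key technical step is the translation of queries. Eval queries are immediate: $\textsc{Eval}_i^{\cev{\mathcal{I}}}(a, b) = \cev{V}_i([a, b]) = V_i([1-b, 1-a]) = \textsc{Eval}_i^{\mathcal{I}}(1-b, 1-a)$, so one eval query on $\mathcal{I}$ suffices. For a right-mark query, the useful observation is that $\textsc{Right-Mark}_i^{\cev{\mathcal{I}}}(x, r)$, being the rightmost $z \in [x, 1]$ with $V_i([1-z, 1-x]) = r$, equals $1 - w^{*}$ where $w^{*}$ is the \emph{leftmost} $w \in [0, 1-x]$ with $V_i([w, 1-x]) = r$. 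Writing $V_i([w, 1-x]) = V_i([0, 1-x]) - V_i([0, w])$, I would compute $s := V_i([0, 1-x]) - r$ with one eval query on $\mathcal{I}$, and then $w^{*} = \textsc{Left-Mark}_i^{\mathcal{I}}(0, s)$ with one left-mark query; the case $s < 0$ corresponds exactly to $\mathcal{A}$'s $\infty$ return value. Each right-mark query of $\mathcal{A}$ therefore costs two queries for $\mathcal{B}$ (one eval plus one left-mark), and each eval query of $\mathcal{A}$ costs one. Summed over the at most $k$ queries $\mathcal{A}$ makes, this yields the claimed bound of $2k$ eval and left-mark queries, with no right-mark queries used.

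The main obstacle I anticipate is getting the orientation flip exactly right: the whole construction hinges on the fact that ``rightmost $z$ such that $[x, z]$ has value $r$'' in the mirrored cake translates, after the change of variables $w = 1-z$ and $y = 1-x$, to ``leftmost $w$ such that $[w, y]$ has value $r$'' in the original cake, which is what a left-mark query on a fixed right endpoint $y$ (expressed through $F_i$) naturally computes. Once this correspondence and the treatment of the $\infty$ boundary case are nailed down, correctness of $\mathcal{B}$ reduces to correctness of $\mathcal{A}$ on $\cev{\mathcal{I}}$ together with the initial equivalence.
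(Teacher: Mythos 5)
Your proposal is correct and follows essentially the same route as the paper's proof: simulate $\mathcal{A}$ on the mirrored instance, translate each eval query by reflecting its endpoints, and translate each right-mark query $\textsc{Right-Mark}_i(x,r)$ into one eval plus one left-mark via the identity $\textsc{Left-Mark}_i(0,\, V_i([0,1-x])-r)$, with the $\infty$ case handled by the sign of $V_i([0,1-x])-r$. The only cosmetic difference is that you make explicit the (order-reversing, value- and connectivity-preserving) bijection between allocations of $\mathcal{I}$ and $\cev{\mathcal{I}}$, which the paper leaves implicit.
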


\begin{proof}
Let $\mathcal{I}$ be an instance in $\mathcal{C}$ in which we wish to determine the existence of a connected strongly-proportional allocation using eval and left-mark queries and using no right-mark queries.
By assumption, there exists an algorithm $\mathcal{A}$ that can determine the existence of a connected strongly-proportional allocation in $\cev{\mathcal{I}}$ using at most $k$ eval and right-mark queries and using no left-mark queries.

Our algorithm $\mathcal{B}$ simulates algorithm $\mathcal{A}$ on $\mathcal{I}$ as follows:
\begin{itemize}
\item \textbf{Case 1: $\mathcal{A}$ makes an $\textsc{Eval}_i(x, y)$ query on $\cev{\mathcal{I}}$.}\\
Then, algorithm $\mathcal{B}$ makes an $\textsc{Eval}_i(1-y, 1-x)$ query on $\mathcal{I}$.
\item \textbf{Case 2: $\mathcal{A}$ makes a $\textsc{Right-Mark}_i(x, r)$ query on $\cev{\mathcal{I}}$.}\\
Then, algorithm $\mathcal{B}$ makes a $\textsc{Left-Mark}_i(0, \textsc{Eval}_i(0, 1-x)-r)$ query on $\mathcal{I}$.
\end{itemize}
Let us verify that the two implementations are identical.
\begin{itemize}
    \item Suppose $r = \textsc{Eval}_i(x, y)$ on $\cev{\mathcal{I}}$.
    Then, $\textsc{Eval}_i(1-y, 1-x)$ on $\mathcal{I}$ is equal to $V_i([1-y, 1-x])$, which is equal to $\cev{V}_i([x, y]) = r$.
    \item Suppose $y = \textsc{Right-Mark}_i(x, r)$ on $\cev{\mathcal{I}}$.
    Then, since the value of $\textsc{Eval}_i(0, 1-x)$ on $\mathcal{I}$ is equal to $V_i([0, 1-x])$ and $\textsc{Left-Mark}_i(0, k)$ on $\mathcal{I}$ is equal to $1 - \textsc{Right-Mark}_i(0, 1-k)$ on $\cev{\mathcal{I}}$ for any $k \in [0, 1]$, $\textsc{Left-Mark}_i(0, \textsc{Eval}_i(0, 1-x)-r)$ on $\mathcal{I}$ is equal to $1 - \textsc{Right-Mark}_i(0, 1-(V_i([0, 1-x])-r))$ on $\cev{\mathcal{I}}$.
    But $1-V_i([0, 1-x]) = 1 - \cev{V}_i([x, 1]) = \cev{V}_i([0, x])$, which means that the result is equal to $1 - \textsc{Right-Mark}_i(0, \cev{V}_i([0, x])+r)$.
    This is equal to $1 - \textsc{Right-Mark}_i(x, r) = 1-y$, which is the mirrored point of $y$.
\end{itemize}
This shows that if algorithm $\mathcal{A}$ determines the existence of a connected strongly-proportional allocation on $\cev{\mathcal{I}}$, then $\mathcal{B}$ determines the existence of a connected strongly-proportional allocation on $\mathcal{I}$.
Note that algorithm $\mathcal{B}$ makes at most two times the number of queries that $\mathcal{A}$ makes.
\end{proof}

\section{Proportionality}
\label{ap:proportionality}

With unequal entitlements, even a connected \emph{proportional} allocation may not exist.
\Cref{alg:general_improved} can be modified to determine the existence of a connected proportional allocation for agents with unequal entitlements (and to output one if it exists) by using \emph{left-marks} instead of right-marks.

\begin{algorithm}[ht]
  \caption{Determining the existence of a connected proportional allocation for $n$ agents.}
  \label{alg:general_proportional}
  \begin{algorithmic}[1]
    \State $x_{\emptyset} \leftarrow 0$
    \For{$k = 1, \ldots, n$}
      \For{each subset $N \subseteq [n]$ with $|N| = k$}
        \State $x_N \leftarrow \infty$
        \For{each agent $i \in N$}
          \State $y \leftarrow \text{\textcolor{red}{\textsc{Left}}-}\textsc{Mark}_i(x_{N \setminus \{i\}}, w_i)$
          \State \algorithmicif \ $y < x_N$ \algorithmicthen \ $x_N \leftarrow y$ \algorithmiccomment{this finds the ``best'' $x_N$}
        \EndFor
      \EndFor
    \EndFor
    \State \algorithmicif \ $x_{[n]}~\textcolor{red}{\leq}~1$ \algorithmicthen \ \Return true \algorithmicelse \ \Return false
  \end{algorithmic}
\end{algorithm}

\begin{proposition}
\Cref{alg:general_proportional} decides whether a connected proportional allocation exists for $n$ agents using at most $n \cdot 2^{n-1}$ queries.
\end{proposition}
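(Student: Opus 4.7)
The plan is to mirror the proof of Theorem~\ref{thm:general_improved_ub}, making two adjustments appropriate for proportional (rather than strongly-proportional) allocations: replace right-marks by left-marks throughout, and replace the strict inequality $< 1$ by the weak inequality $\leq 1$.

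First, I would establish a proportional analogue of Theorem~\ref{thm:general_condition}: a connected proportional allocation exists if and only if there is a permutation $\sigma : [n] \to [n]$ such that $\textsc{Left-Mark}_{\sigma}(0, \mathbf{w}) \leq 1$, where this operation is defined exactly as in Algorithm~\ref{alg:mark_sigma} but with each $\textsc{Mark}$ call replaced by its left-mark counterpart. For the forward direction, given a connected proportional allocation with cut points $y_0 = 0, y_1, \ldots, y_n = 1$ where agent $\sigma(k)$ receives $[y_{k-1}, y_k]$, let $x_k$ be the $k$-th mark of the sequential left-mark process for $\sigma$. I would prove $x_k \leq y_k$ by induction: the inductive step uses monotonicity of $\textsc{Left-Mark}_i(\cdot, w_i)$ in its first argument together with $V_{\sigma(k+1)}([y_k, y_{k+1}]) \geq w_{\sigma(k+1)}$, giving $x_{k+1} \leq \textsc{Left-Mark}_{\sigma(k+1)}(y_k, w_{\sigma(k+1)}) \leq y_{k+1}$. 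Hence $x_n \leq 1$. For the reverse direction, given sequential left-marks $0 = x_0, x_1, \ldots, x_n \leq 1$ for some $\sigma$, I would allocate $[x_{k-1}, x_k]$ to agent $\sigma(k)$ for $k < n$ and $[x_{n-1}, 1]$ to agent $\sigma(n)$. Each $[x_{k-1}, x_k]$ is worth exactly $w_{\sigma(k)}$ to agent $\sigma(k)$ by definition of the left-mark, while $[x_{n-1}, 1] \supseteq [x_{n-1}, x_n]$ is worth at least $w_{\sigma(n)}$ to agent $\sigma(n)$.

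Next, by induction on $|N|$ I would show that the quantity $x_N$ computed by Algorithm~\ref{alg:general_proportional} equals the minimum, over all permutations of $N$, of the corresponding sequential left-mark outcome started at $0$. The base case $N = \emptyset$ is immediate. For the inductive step, any permutation of $N$ splits as ``a permutation of $N \setminus \{i\}$ followed by $i$'' for some $i \in N$; by monotonicity of $\textsc{Left-Mark}_i(\cdot, w_i)$ and the inductive hypothesis applied to $N \setminus \{i\}$, the minimum outcome among permutations ending with $i$ is exactly $\textsc{Left-Mark}_i(x_{N \setminus \{i\}}, w_i)$, and the algorithm's inner loop takes the minimum of this over $i \in N$. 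Combined with the characterization above, this shows that the algorithm returns true if and only if a connected proportional allocation exists. The query count argument is then identical to that in Theorem~\ref{thm:general_improved_ub}: for each of the $\binom{n}{k}$ subsets of size $k$ the algorithm makes $k$ queries, totalling $\sum_{k=1}^{n} k \binom{n}{k} = n \cdot 2^{n-1}$.

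The main delicacy is keeping the inequality direction consistent: in the strongly-proportional setting we needed strict $<$ with right-marks (so that after the sequential process there is strict slack that can be redistributed), whereas here we need weak $\leq$ with left-marks (so that an agent whose piece extends past her left-mark still receives value at least $w_i$, with equality permitted). Using left-marks is also what secures the monotonicity property that lets the dynamic program replace a best-continuation argument over permutations by a pointwise minimum over the last-agent choice; this is the single place where the switch from right-marks to left-marks is essential rather than cosmetic.
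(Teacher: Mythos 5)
Your proof is correct and follows exactly the route the paper intends: the paper states this proposition without an explicit proof, as the straightforward adaptation of \Cref{thm:general_condition} and \Cref{thm:general_improved_ub} with left-marks and the weak inequality $\leq 1$, and that is precisely what you carry out (characterization via sequential left-marks, exactness of each piece $[x_{k-1},x_k]$ in the reverse direction, the dynamic-programming invariant that $x_N$ is the minimum over permutations of $N$, and the $\sum_k k\binom{n}{k} = n\cdot 2^{n-1}$ count). One small quibble with your closing remark: right-marks are equally monotone non-decreasing in their first argument (the paper's own dynamic program in \Cref{thm:general_improved_ub} relies on exactly this), so the place where the switch to left-marks is genuinely essential is not the monotonicity step but the forward direction of the characterization---an agent who receives value exactly $w_i$ may have a piece that ends strictly before her \emph{rightmost} $w_i$-mark, which would break the induction $x_k \leq y_k$ if right-marks were used.
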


We remark that our algorithm is similar to that in \citet[Theorem 4]{aumann2012computing} where they find an approximate optimum egalitarian welfare on a piece of cake---here, we extend it to agents with possibly unequal entitlements.

\end{document}